\definecolor{ourgreen}{rgb}{0, 0.6, 0.5}
\definecolor{ourred}{rgb}{0.8, 0.4, 0}
\definecolor{ourblue}{rgb}{0, 0.45, 0.7}
\newtheoremstyle{redthm}
  {}
  {}
  {\color{Black}\itshape\sffamily}
  {}
  {\bfseries\color{Red}\sffamily}
  {.}
  {5pt}
  {}
\newtheoremstyle{ques}
  {}
  {}
  {\color{Black}\itshape\sffamily}
  {}
  {\bfseries\color{Blue}\sffamily}
  {.}
  {5pt}
  {}
\newtheoremstyle{greendef}
  {}
  {}
  {\sffamily\color{Blue}}
  {}
  {\bfseries\color{ForestGreen}\sffamily}
  {.}
  {5pt}
  {}
\theoremstyle{redthm}
\newtheorem{thm}{Theorem}[section]
\newtheorem*{thm*}{Theorem}
\newtheorem{prop}[thm]{Proposition}
\newtheorem{lemma}[thm]{Lemma}
\newtheorem{corollary}[thm]{Corollary}
\newtheorem{Ass}[thm]{Assumptinos}
\theoremstyle{ques}
\newtheorem{ques}[thm]{Question}
\newtheorem*{ques*}{Question}
\theoremstyle{greendef}
\newtheorem{definition}[thm]{Definition} 
\newtheorem{example}[thm]{Example}
\theoremstyle{remark}
\newtheorem{remark}[thm]{Remark}
\theoremstyle{observation}
\theoremstyle{plain}
\numberwithin{equation}{thm}
\newcommand{\R}{\mathbb{R}}
\newcommand{\mc}{\mathcal}
\newcommand{\N}{\mathbb{N}}
\newcommand{\cay}{\mathop{\mathsf{Cay}}}
\newcommand{\emb}{\hookrightarrow}
\newcommand{\hull}[2]{\mathrm{a\text{-}Core}_{#2}(#1)} 
\newcommand{\core}[2]{\mathrm{Core}_{#2}(#1)} 
\newcommand{\comp}[2]{\mathrm{Comp}_{#2}(#1)} 
\newcommand{\abs}[1]{\left\lvert #1 \right\rvert}
\newcommand{\mb}{\partial_*}
\newcommand{\acts}{\curvearrowright}
\newcommand{\ra}{\rightarrow}
\newcommand{\cu}{\subseteq}
\newcommand{\mf}{\mathfrak}
\newcommand{\s}{\sigma}
\newcommand{\eps}{\epsilon}
\newcommand{\g}{\gamma}
\newcommand{\G}{X}
\DeclareMathOperator{\diam}{diam}
\newcommand{\E}{\mc{E}}
\renewcommand{\H}{\mc{H}}
\title{Subgroups arising from connected components in the Morse boundary 
} 
\author {Annette Karrer, Babak Miraftab, Stefanie Zbinden}
\begin{document}

\begin{abstract}
We study connected components of the Morse boundary and their stabilisers. We introduce the notion of \textit{point-convergence} and show that if the set of non-singleton connected components of the Morse boundary of a finitely generated group $G$ is point-convergent, then every non-singleton connected component is the (relative) Morse boundary of its stabiliser. The above property only depends on the topology of the Morse boundary and hence is invariant under quasi-isometry. This shows that the topology of the Morse boundary not only carries algebraic information but can be used to detect certain subgroups which in some sense are invariant under quasi-isometry. 
\end{abstract}

\maketitle

\vspace{- 1.0cm}
\section{Introduction}

 The \textit{Morse boundary} of a space captures its hyperbolic-like behaviour at infinity. It was introduced by Cordes~\cite{Cordes} generalising the contracting boundary of Charney--Sultan~\cite{CHAR14}. As it is a quasi-isometry invariant, it can be used to distinguish finitely generated groups up to quasi-isometry, as demonstrated in \cite{CHAR14} and \cite{zbinden2023small}.
 
 In a different direction, we propose to investigate what algebraic information the Morse boundary carries. For example, information about subgroups and splittings. This seems likely, since Morse boundaries generalise Gromov boundaries, which are well known in this regard (see e.g.~\cite{Bowditch_top_char, Bowditch_cutpairs,Swenson,Mahan}). The known splitting results on the Bowditch boundary~\cite{Bowditch_splitting},~\cite{Haulmark},~\cite{Haulmark-Hruska} the combination results about the Morse boundary~\cite{FK22},~\cite{zbinden2023morse} and the capability to detect stable subgroups based on their action on the Morse boundary \cite{CD16}, further support this proposal.

In this paper, we aim to enforce this idea by showing that connected components of the Morse boundary (later referred to as boundary components) indeed carry algebraic information about subgroups. If $C$ is a non-singleton connected component in the Gromov boundary of a hyperbolic group, then $C$ is homeomorphic to the Gromov boundary of its stabiliser as a consequence of the accessibility of hyperbolic groups~\cite{Gromov, Dunwoody}.
We generalise this result to the Morse boundary under the assumption that the orbit of the boundary component under consideration satisfies a discreteness property we call \emph{point-convergence} (see \cref{definition:point-convergence} and the discussion below). 

\begin{thm}\label{thm:main}
   Let $G$ be a finitely generated group. Let $C$ be a non-singleton connected component of the Morse boundary $\mb G$. If $G\cdot C$ is point-convergent, then $(\mb H,  G) = C$, where $H\leq G$ is the stabiliser of $C$ under the action $G \acts \mb G$.
\end{thm}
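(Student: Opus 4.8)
The plan is to realise $C$ as the Morse boundary of a suitable $H$-invariant quasiconvex subset of $\cay(G)$ — its hull $\Hull{C}$ — and then transport topological information along a quasi-isometry $H\simeq\Hull{C}$ produced by the Milnor--Švarc lemma, where $H=\mathrm{Stab}_G(C)$.

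\textbf{Step 1 (the hull as a model).} Set $Y\define\Hull{C}\subseteq\cay(G)$. It is quasiconvex and, since $H$ permutes the Morse geodesics used to build it, $H$-invariant. Because $C$ is non-singleton and connected and connected components of $\mb G$ are closed, I would record two structural facts about $Y$: (i) $Y$ is coarsely connected and unbounded, hence a quasigeodesic space; and (ii) $\partial_* Y = C$ as subsets of $\mb G$. For (ii), the inclusion $C\subseteq\partial_* Y$ is immediate because $Y$ contains Morse rays/lines limiting to every point of $C$, while $\partial_* Y\subseteq C$ holds because a Morse ray in $Y$ fellow-travels those defining lines and hence limits into $\overline{C}=C$; this last point is the one where the direct-limit topology on $\mb G$ must be handled with care.

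\textbf{Step 2 (point-convergence $\Rightarrow$ cocompactness).} This is the heart of the proof: show that $H\acts Y$ is cocompact, equivalently $\sup_{y\in Y}d(y,H)<\infty$. Suppose not, so there are $y_n\in Y$ with $d(y_n,H)\to\infty$; pick $g_n\in G$ with $d(g_n y_n,1_G)\le 1$, so $1_G$ lies coarsely in $g_n\cdot Y=\Hull{g_n C}$. Now split into cases on the orbit $\{g_n C\}_n$. If it is finite, then along a subsequence $g_nC=g_mC$, whence $g_m^{-1}g_n\in H$ and $d(y_n,H)\le d(g_m^{-1}g_n y_n,\,g_m^{-1}\cdot 1_G)$ is bounded — a contradiction. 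If it is infinite, pass to a subsequence of pairwise distinct translates $g_nC$; by point-convergence (\cref{definition:point-convergence}) these must "shrink towards a single boundary point", and I would convert this into the metric statement that the hulls $\Hull{g_nC}$ eventually leave every bounded set, contradicting that they coarsely contain $1_G$. Supplying that conversion — from the purely topological point-convergence hypothesis to the receding of hulls — is the main obstacle; I expect it to rest on an Arzelà--Ascoli argument for uniformly Morse geodesics together with the fact that no bi-infinite Morse geodesic has its two endpoints equal.

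\textbf{Step 3 (transport along the quasi-isometry).} Since $G\acts\cay(G)$ is proper, so is $H\acts Y$; combined with Step 2 and coarse connectedness of $Y$, the Milnor--Švarc lemma shows the orbit map $H\to Y$ is a quasi-isometry, hence induces a homeomorphism $\mb H\cong\partial_* Y$. Because $Y$ is quasiconvex in $\cay(G)$, this homeomorphism is compatible with the inclusions into $\mb G$, so it identifies the relative Morse boundary $(\mb H,G)$ with $\partial_* Y$ carrying its subspace topology from $\mb G$, which by Step 1(ii) is exactly $C$. This yields $(\mb H,G)=C$. Throughout, the non-singleton hypothesis on $C$ is what makes $Y$ unbounded (so that $\mb H$ is genuinely $C$ and not degenerate), and the connectedness of $C$ is what makes $Y$ coarsely connected (so that Milnor--Švarc applies).
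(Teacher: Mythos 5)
Your architecture (weak hull $Y$ of $C$, cocompactness of $H\acts Y$, Milnor--\v{S}varc) is genuinely different from the paper's, and its central step does not go through. The paper never proves, and does not need, that $H$ acts cocompactly on the full weak hull: it stratifies the hull by Morse gauge, shows for each fixed $M$ that the $M$--core $\core{C,v}{M}$ lies in a $k(M)$--neighbourhood of $H$ (\cref{lemma:close_to_stab}, via finiteness of the $M$--anti-core, \cref{lemma:orbit-convergence_implies_finite_hull}), and this gauge-dependent bound already suffices for $C\subseteq(\mb H,G)$ because the definition of the relative Morse boundary allows the neighbourhood constant to depend on the ray. Your Step 2 instead demands a single uniform bound $\sup_{y\in Y}d(y,H)<\infty$ over the whole hull, which is a union over \emph{all} gauges; nothing in the hypotheses provides such uniformity, and the constant produced by the paper's argument depends on the gauge. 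Concretely, the conversion you defer --- from point-convergence to ``the translated hulls recede from $1_G$'' --- is exactly where this breaks: the lines witnessing that $1_G$ lies coarsely in $g_n\cdot Y$ have no common Morse gauge, so Arzel\`a--Ascoli need not produce a Morse limit line, and the limiting directions need not be points of $\mb G$ lying in $\lim_n g_nC$; point-convergence then yields no contradiction. (The paper's \cref{lemma:point-to-finite-compint} works precisely because the witness lines are required to be $N$--Morse for a gauge $N$ fixed in advance.)

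Two further gaps. First, your Step 1(ii), $\mb Y\subseteq C$, is asserted via ``fellow-travelling the defining lines'', but a Morse ray near $Y$ only passes near points of lines of unbounded gauge, so the approximating rays are not uniformly Morse and their endpoints need not converge to $\gamma^+$ in the direct-limit topology; the paper's proof of the corresponding inclusion $(\mb H,G)\subseteq C$ (\cref{lemma:close_to_C}) avoids this by translating \emph{one fixed} $N$--Morse witness line by elements of $H$ and invoking \cref{lemma:corresponding_ray} to retain uniform control. Second, in Step 3, even granting a quasi-isometry $H\to Y$, the induced homeomorphism identifies the intrinsic boundary $\mb H$ with $\mb Y$, not a priori with the relative boundary $(\mb H,G)$ as a subspace of $\mb G$: rays that are Morse in $H$ need not be Morse in $G$, so the asserted compatibility with the inclusions requires a separate argument (compare \cref{lemma:defintions_agree}, which only addresses the set-level statement). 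To repair the proof you would have to abandon the uniform cocompactness claim and argue gauge by gauge, at which point you are reconstructing the paper's core/anti-core argument.
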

Here, $(\mb H ,G)$ is the relative Morse boundary of $H$ in $G$ and consists of all equivalence classes of Morse geodesic rays in $G$ which are at bounded distance from~$H$. 

We prove that the orbit of every boundary component is point-convergent for groups that split in a special way or are quasi-isometric to such groups (see \cref{cor:graph_of_groups} and \cref{cor:q_i_to_amalgam}). To our knowledge, there is no known example of a group with a boundary component whose orbit is not point-convergent.

\textit{Point-convergence} (\cref{definition:point-convergence}) generalises the notion of \textit{being a null family}. In particular, for compact metric spaces - for instance the Gromov boundary of hyperbolic groups -  both notions coincide. Null families play an important role in the study of boundaries~\cite{width},~\cite{Mahan},~\cite{Hruska_Ruane}, \cite{dasgupta2022local},\cite{benzvi2019boundaries} and decompositions of manifolds~\cite{Daverman}[VI].
They were in particular used to characterise quasi-convex subgroups of hyperbolic groups, partially by Gitik--Mitra--Rips--Sageev~\cite{width}[Cor 2.5] and implicitly but fully by Mj~\cite{Mahan}[Prop 2.3]. This and \cref{thm:main} suggest  that point-convergence is a notion of interest and could be further useful in the study of Morse boundaries and other non-compact non-metrizable spaces. 

If the family of all non-singleton boundary components of a group is point-convergent, so is any particular orbit of a boundary component. Since the former is a purely topological property of the Morse boundary (and hence invariant under quasi-isometries) \cref{thm:main} has the following consequence. 
\begin{corollary}\label{cor:q_i_toG'}
  Let $G$ be a finitely generated group for which the family of all non-singleton connected components of the Morse boundary $\mb G$ is point-convergent. Then for any group $G'$ which is quasi-isometric to $G$, we have that $(\mb H',G') = C'$ for every non-singleton connected component $C'\subset \mb G'$ and its stabiliser $H'\leq G'$.
\end{corollary}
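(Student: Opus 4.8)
The plan is to derive this as a direct consequence of \cref{thm:main}, using the fact that "the family of all non-singleton connected components of the Morse boundary is point-convergent" is a property that transfers across quasi-isometries. First I would recall that a quasi-isometry $\phi\colon G \to G'$ induces a homeomorphism $\partial_\phi\colon \mb G \to \mb G'$ on Morse boundaries (this is the quasi-isometry invariance of the Morse boundary, due to Cordes, building on Charney--Sultan). Since $\partial_\phi$ is a homeomorphism, it sends connected components to connected components and non-singletons to non-singletons; hence it carries the family $\mathcal{F}$ of all non-singleton boundary components of $\mb G$ bijectively onto the corresponding family $\mathcal{F}'$ in $\mb G'$. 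Because point-convergence of a family is, by design (see the discussion after \cref{definition:point-convergence}), a purely topological condition on the ambient space and the family, $\mathcal{F}$ being point-convergent forces $\mathcal{F}'$ to be point-convergent as well.

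Next I would observe the elementary set-theoretic point that if the \emph{entire} family $\mathcal{F}'$ of non-singleton boundary components of $\mb{G'}$ is point-convergent, then so is any subfamily, and in particular the orbit $G' \cdot C'$ of any single non-singleton component $C' \subset \mb{G'}$ under the action $G' \acts \mb{G'}$ — this orbit is a subset of $\mathcal{F}'$ (the $G'$-action permutes connected components and preserves the non-singleton property, since it is by homeomorphisms). One should state explicitly the monotonicity lemma "a subfamily of a point-convergent family is point-convergent", which is immediate from the definition but worth citing.

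Finally, with $G' \cdot C'$ point-convergent, I would apply \cref{thm:main} to the group $G'$ and the component $C'$: this yields $(\mb{H'}, G') = C'$ where $H' \leq G'$ is the stabiliser of $C'$ under $G' \acts \mb{G'}$. Since $C'$ was an arbitrary non-singleton connected component of $\mb{G'}$, this is exactly the conclusion of the corollary.

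The only genuinely substantive point — and the one I would want to pin down carefully — is the transfer of point-convergence under the boundary homeomorphism $\partial_\phi$; everything else is bookkeeping. The subtlety is that point-convergence (unlike being a null family in a compact metric space) is defined on a possibly non-metrizable space, so one must confirm that its definition is phrased entirely in terms of the topology (open covers, convergence of points, etc.) and involves no auxiliary metric; granting that — which the authors assert in the paragraph preceding this corollary — the homeomorphism $\partial_\phi$ transports the property verbatim, and no quantitative control on $\phi$ is needed beyond it being a quasi-isometry.
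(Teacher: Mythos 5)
Your argument is correct and is essentially the paper's own: the authors justify the corollary in the paragraph preceding it by noting that point-convergence of the full family of non-singleton components is a purely topological (hence quasi-isometry-invariant) property, that any orbit $G'\cdot C'$ is a subfamily and therefore point-convergent, and then applying \cref{thm:main} to $G'$. You have simply spelled out the same three steps (boundary homeomorphism, monotonicity under subfamilies, application of the theorem) in more detail.
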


\cref{cor:q_i_toG'} shows that the Morse boundary (at least for ``nice'' groups) carries information about the subgroup structure which is preserved under quasi-isometry. This is remarkable because in general, quasi-isometries do not preserve the subgroup structure of groups at all. A natural question is whether the following strengthening of \cref{cor:q_i_toG'} is true. 

\begin{ques}\label{ques:strong_qi}
 Let $f :G\to G'$ be a quasi-isometry between finitely generated groups $G$ and $G'$. Assume that the family of all non-singleton connected components of the Morse boundary $\mb G$ is point-convergent and let $C$ be a non-singleton connected component of the Morse boundary $\mb G$. Is the stabiliser of $f(C)$ quasi-isometric to the stabiliser of $C$?
\end{ques}

A positive answer to \cref{ques:strong_qi} would be interesting, as it would show an even stronger correlation between the Morse boundary and algebraic properties of the group.

It turns out to be quite involved to prove or disprove point-convergence. As mentioned before, we do not know an example of a group with a boundary component whose orbit is not point-convergent.

\begin{ques}\label{ques}
    Let $G$ be a finitely generated group. Is the family of non-singleton connected components of its Morse boundary $\mb G$ point-convergent? If $G$ is a group hyperbolic relative to abelian groups, are the non-singleton connected components of its Morse boundary point-convergent?
\end{ques}

For a certain class of groups that split ``nicely'', we use the results from \cite{FK22} to answer \cref{ques} positively.

\begin{thm}\label{cor:graph_of_groups}
Let $G$ be a finitely generated group which splits as a graph of groups. Suppose that
\begin{itemize}
    \item All edge groups are finitely generated and undistorted in $G$;
    \item $(\partial_*E,G)=\emptyset$ for every edge group $E$.
     \item For every vertex groups $V$, either $(\partial_*V,G)$ is a connected subspace of $\partial_* G$ or $V$ satisfies the following two items:
   \begin{itemize}
        \item The inclusion $\partial_*V \emb \partial_* G$ is a well-defined topological embedding. In particular, $\mb V \cong (\mb V, G)$. 
        \item The family of non-singleton connected components of $\partial_* V$ is point-convergent.
    \end{itemize}
\end{itemize}
Then the family of non-singleton connected components of $\partial_*G$ is point-convergent.
\end{thm}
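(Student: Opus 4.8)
The plan is to build the point-convergence of the family of non-singleton components of $\partial_* G$ out of the point-convergence data supplied by the vertex groups, using the structure of the Morse boundary of a graph of groups from \cite{FK22}. Since the edge groups $E$ satisfy $(\partial_* E, G) = \emptyset$, each edge contributes nothing to $\partial_* G$, and the results of \cite{FK22} should give a description of $\partial_* G$ as being ``assembled'' from translated copies $g(\partial_* V, G)$ of the relative Morse boundaries of the vertex groups (indexed over $g$ in the appropriate coset spaces), together with limit points coming from infinite rays in the Bass--Serre tree. The first step is to recall this description precisely and to identify, for each non-singleton connected component $C$ of $\partial_* G$, which vertex group (or translate thereof) it ``lives in'': a connected component that is not a singleton cannot be a tree-limit point (those are isolated-type points in a suitable sense, being limits along the tree direction), so every non-singleton component is contained in some translate $g(\partial_* V, G)$. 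When $(\partial_* V, G)$ is itself connected, the component is a whole translate $g(\partial_* V, G)$; when $V$ instead satisfies the embedding-plus-point-convergence hypothesis, the non-singleton components inside $g(\partial_* V, G) \cong g\,\partial_* V$ are exactly the $g$-translates of the non-singleton components of $\partial_* V$.

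The second step is to set up the counting/discreteness estimate that point-convergence demands. Fix a basepoint and a ``scale''; we must show that for every such scale only finitely many translates of vertex-group components are ``large'' at that scale, and that for any sequence of distinct non-singleton components, suitable diameters shrink to zero away from the basepoint. There are two sources of control here. First, the edge groups are finitely generated and undistorted in $G$; this is exactly what is needed to ensure that, in a neighbourhood of infinity in a fixed direction of the Bass--Serre tree, the relevant translated boundaries become uniformly small — this is the mechanism by which ``being a null family / point-convergent'' propagates across the tree, and is the analogue of the classical fact for hyperbolic graphs of groups. Second, within a single translate $g(\partial_* V, G)$ coming from a vertex group $V$ of the second type, point-convergence of the non-singleton components of $\partial_* V$ together with the fact that the inclusion $\partial_* V \hookrightarrow \partial_* G$ is a topological embedding lets us transport the point-convergence estimate from $\partial_* V$ into $\partial_* G$. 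The remaining case — vertex groups $V$ with $(\partial_* V, G)$ connected — contributes at most one component per translate, and these translates are controlled by the same tree-direction argument as the edge groups.

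The third step is to combine these: given an arbitrary sequence $(C_n)$ of pairwise-distinct non-singleton components of $\partial_* G$, route each $C_n$ to the translate $g_n(\partial_* V_n, G)$ containing it. Either infinitely many of the $g_n$ are distinct — in which case the tree-direction/undistortedness estimate forces the $C_n$ to escape every bounded region and shrink — or infinitely many share a common translate $g(\partial_* V, G)$, and then $V$ must be of the second type (a connected $(\partial_* V, G)$ yields only one component per translate), so point-convergence inside $\partial_* V$ finishes the job via the embedding. Formally this is the verification of whatever the precise $\varepsilon$-$N$ (or filter) formulation of \cref{definition:point-convergence} is; the bookkeeping is routine once the two estimates above are in place.

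\textbf{Main obstacle.} I expect the crux to be the tree-direction estimate: showing that translates $g(\partial_* V, G)$ (and $g(\partial_* E, G)$-neighbourhoods) indexed by group elements $g$ that push a fixed edge far out along the Bass--Serre tree become uniformly topologically small in $\partial_* G$. This is where undistortedness and finite generation of the edge groups must be used in an essential, quantitative way, and where one has to interface carefully with the topology of the Morse boundary (which is not metrizable, so ``small'' must be phrased in terms of the actual neighbourhood basis, presumably via the descriptions in \cite{FK22}). The vertex-group-internal part, by contrast, is essentially a transport-of-structure argument along the embedding $\partial_* V \hookrightarrow \partial_* G$ and should be comparatively soft.
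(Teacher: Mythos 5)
Your high-level dichotomy at the end (route each component to a translate of a vertex group; either infinitely many distinct translates occur, or infinitely many components concentrate in one translate, which then must be of the second type) does match the paper's final case analysis. But the central mechanism you propose for the first branch --- a quantitative ``tree-direction estimate'' showing that translates $g(\partial_* V,G)$ become uniformly topologically small far out in the Bass--Serre tree, in analogy with the null-family argument for hyperbolic graphs of groups --- is a genuine gap, and you correctly sense this yourself when you flag it as the main obstacle. The Morse boundary is neither compact nor metrizable, and the paper never proves (or needs) any such smallness estimate. The missing idea is \cref{lemma:full_conv}: point-convergence of the family of non-singleton components of $\mb G$ is \emph{equivalent} to finiteness of $\comp{v}{N}$ for a single vertex $v$ and every Morse gauge $N$, i.e.\ to the statement that only finitely many components admit an $N$-Morse bi-infinite geodesic through $v$. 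This converts the topological statement you are trying to quantify into a statement about geodesic lines in the Cayley graph, where Arzel\`a--Ascoli and \cref{lemma:convergence_of_N_morse} apply. Without this reduction (or a substitute for it), your step 2 has no precise formulation to verify, which is why your step 3 cannot be completed as written.

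With that reduction in hand, the paper's version of your ``tree-direction'' step is \cref{lemma:finitely_many_vert_groups}: if $(\gamma_n)$ is a convergent sequence of $N$-Morse lines through $v$ with both endpoints of $\gamma_n$ in a component $C_n$, then the associated ultrafilters $\sigma(\gamma_n^+)$ are eventually all equal to $\sigma_{\mf x}$ for a single tree-vertex $\mf x$. The proof is not a smallness estimate but a trapping argument: if infinitely many $\gamma_n$ had a different ultrafilter, separation by halfspaces together with \cref{AIS lem}\eqref{ais4} and local finiteness of $X$ would force the limit line $\gamma$ into a bounded neighbourhood of a single $X(\mf e)$, contradicting $(\mb E, G)=\emptyset$. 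This is where undistortedness and the empty relative boundary of edge groups enter --- not through a metric decay of boundary translates. After that, \cref{line ultrafilter}(3) places all $C_n$ (for large $n$) inside $(\mb G_{\mf x}, G)$, and your two cases (connected relative boundary, versus topological embedding plus point-convergence of $\mc C_{G_{\mf x}}$) each yield a contradiction exactly as you describe. So the end of your plan is sound, but the bridge to get there needs to be the feasible-pair/geodesic-line characterisation rather than a uniform smallness estimate in the boundary topology.
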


There are many groups which satisfy the assumption of the theorem above. For instance, all fundamental groups of oriented closed 3-manifolds (an explanation of this can be found in the proof of Theorem 7.2 of \cite{zbinden2022morse}). Moreover, gluing hyperbolic 3-manifolds with at least two cusps along one of their boundary tori gives further examples, since Charney-Cordes-Sisto show in \cite{charney2020complete} that the Morse boundary of a hyperbolic 3-manifold with cusps is connected.
 
The following result is an application of \cref{cor:q_i_toG'} and \cref{cor:graph_of_groups}.

\begin{corollary}\label{cor:q_i_to_amalgam}
    Let $G'$ be a finitely generated group quasi-isometric to a group $G$ as in \cref{cor:graph_of_groups}. Then for every non-singleton connected component $C'\subset \mb G'$ we have that $(\mb H',G') = C'$, where $H'\leq G'$ is the stabiliser of $C'$ in $ G' \acts \mb G'$.
\end{corollary}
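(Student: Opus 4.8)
The plan is to chain together the two results that immediately precede this corollary. Let $G'$ be quasi-isometric to a group $G$ satisfying the hypotheses of \cref{cor:graph_of_groups}. First I would invoke \cref{cor:graph_of_groups} to conclude that the family of all non-singleton connected components of $\partial_* G$ is point-convergent. Since point-convergence of the family of \emph{all} non-singleton boundary components is a purely topological property of the Morse boundary (as noted in the discussion before \cref{cor:q_i_toG'}), and the Morse boundary is a quasi-isometry invariant, the same property holds for $\partial_* G'$: the family of non-singleton connected components of $\partial_* G'$ is point-convergent.

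Next I would apply \cref{thm:main} to $G'$ directly. Let $C' \subset \partial_* G'$ be any non-singleton connected component and let $H' \leq G'$ be its stabiliser under the action $G' \acts \partial_* G'$. Because the family of all non-singleton components is point-convergent, so is the single orbit $G' \cdot C'$ (as observed in the paragraph preceding \cref{cor:q_i_toG'}: if the whole family is point-convergent, any particular orbit is). Hence the hypothesis of \cref{thm:main} is satisfied, and we obtain $(\partial_* H', G') = C'$, which is exactly the claimed conclusion.

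In fact this corollary is essentially an instance of \cref{cor:q_i_toG'} with the hypothesis of that corollary verified via \cref{cor:graph_of_groups}; one could alternatively phrase the proof as: "$G$ satisfies the hypothesis of \cref{cor:q_i_toG'} by \cref{cor:graph_of_groups}, so \cref{cor:q_i_toG'} applied to $G$ and $G'$ gives the result." I would present whichever of the two phrasings reads more cleanly in context, probably the one that cites \cref{cor:q_i_toG'} directly to avoid repeating the quasi-isometry-invariance argument.

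There is essentially no obstacle here: the corollary is a formal consequence of results already established. The only point requiring a line of care is the bookkeeping that "point-convergence of the full family of non-singleton components" is genuinely a topological invariant of $\partial_* G$ — but this is precisely what was asserted in the text leading up to \cref{cor:q_i_toG'} and presumably recorded as part of \cref{definition:point-convergence} or the surrounding lemmas, so it may be cited rather than reproved. If anything, the "hard part" is purely expository: making sure the quantifiers match, i.e. that \cref{thm:main} is being applied to $G'$ (not $G$) and that its point-convergence hypothesis for the orbit $G' \cdot C'$ follows from the stronger family-wide statement for $\partial_* G'$.
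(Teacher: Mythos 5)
Your proposal matches the paper's argument exactly: the paper derives this corollary by applying \cref{cor:graph_of_groups} to establish point-convergence of the full family of non-singleton components of $\mb G$, noting that this is a purely topological (hence quasi-isometry-invariant) condition, and then invoking \cref{cor:q_i_toG'}. Both of your suggested phrasings are faithful to the intended proof, and the quantifier bookkeeping you flag is handled correctly.
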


Again, one can ask whether \cref{cor:q_i_to_amalgam} can be strengthened. Namely, one can ask whether not only the stabiliser of connected components structure but also the splitting structure is preserved under quasi-isometries. To formulate this question, we define \textit{Morse-accessibility}.

\begin{definition}\label{def_Morse_accessible}
    We say that a finitely generated group $G$ is \textit{Morse-accessible} if a process of iterated nontrivial splittings of $G$ along groups with empty relative Morse boundary terminates after finitely many steps and all vertex groups of the last iteration have connected Morse boundary. 
\end{definition}

For instance, the fundamental group of an oriented closed 3-manifold is Morse-accessible (again, an explanation of this can be found in the proof of Theorem 7.2 of \cite{zbinden2022morse}).

\begin{ques}\label{ques_Morse_acc}
    Let $G'$ be a finitely generated group that is quasi-isometric to a finitely generated Morse-accessible group $G$. Is $G'$ Morse-accessible as well?
\end{ques}

For hyperbolic groups, the answer to this question is positive as every hyperbolic group is accessible~\cite{Dunwoody,Gromov}.

\subsection*{Outline} In \cref{sec:prelim}, we recall some background about the Morse boundary and introduce the definition of relative Morse boundary, which is different from the definition used in other papers. In \cref{sec:point:conv} we define point-convergence and show that it can and should be viewed  as a generalisation of the well-known notion of null-family for non-metrizable spaces. Given a metric space $X$, we introduce the notion of core of a boundary component, which is a subset of its weak (convex) hull. The $N$--core of $X$, given a Morse gauge $N$ and a vertex in $X$, is a subspace of $X$ associated to $C$. It can be thought of as the ``inner'' of $C$ with respect to a certain Morse gauge $N$. In some sense dual to the notion of core, we introduce the notion of anti-core, which is a set of connected components of $\mb X$ which is associated to a given point of $X$. As the core, the anti-core depends on a choice of the Morse gauge. Lastly, in \cref{lemma:orbit-convergence_implies_finite_hull} we relate having finite anti-core to certain families of connected components of $\mb X$ being point-convergent. In \cref{sec:stabiliser} we prove \cref{thm:main}. The main tools for the proof are the notions of core and anti-core as defined in the previous section and \cref{lemma:orbit-convergence_implies_finite_hull}. Lastly, in \cref{sec:graph_of_groups}, we prove \cref{cor:graph_of_groups}. This is done by using the tools from \cite{FK22}.

\subsection*{Acknowledgements} We would like to thank Alessandro Sisto, Ruth Charney, Merlin Incerti-Medici, Jean-François Lafont, Matthew Cordes, Matthew R. Haulmark, Vivian He, Mahan Mj, and Alex Margolis for the helpful discussions and the great support we received. Karrer was partially supported by CIRGET and CRM, by the NSF (under grant DMS-2109683), and the fields institute within the Thematic Program on Randomness and Geometry.

\section{Preliminaries and Notation}\label{sec:prelim}

In this section, unless noted otherwise, $(X,d)$ denotes a proper geodesic metric space.

\subsection{Morseness}

\begin{definition}[Morse gauge] A function $M : \R_{\geq1}\times \R_{\geq 0}\to \R_{\geq 0}$ is a Morse gauge if it is continuous in the second coordinate and increasing. We denote the set of all Morse gauges by $\mc M$.
\end{definition}

\begin{definition}[Morse] Let $M$ be a Morse gauge. A quasi-geodesic $\gamma$ is $M$-Morse if all $(q, Q)$-quasi-geodesics with endpoints $\gamma(s)$ and $\gamma(t)$ for $s\leq t$ stay in the closed $M(q, Q)$-neighbourhood of $\gamma[s, t]$. A quasi-geodesic is Morse if it is $M$-Morse for some $M$.
\end{definition}

\begin{remark}
    This definition deviates slightly from the definition of Morse as defined in \cite{Cordes}. However, it is equivalent in the sense that if a geodesic is $M$-Morse for one of the definitions, it is $M'$-Morse for the other, where $M'$ only depends on $M$ (see \cite[Lemma 2.1]{Cordes} and \cite[Lemma A.4]{CSZ23}). With our definition, any subgeodesic of an $N$-Morse geodesic is $N$-Morse. 
\end{remark}

If a geodesic is Morse, we think of it as exhibiting hyperbolic-like properties. To every Morse gauge $M$ one can associate a constant $\delta_M = \max\{4M(1, 2M(5, 0)) +
2M(5, 0), 8M(3, 0)\}$. The constant $\delta_M$ should be thought of as the hyperbolicity constant of $M$-Morse geodesics.

\textbf{Notation:} Let $x, y\in X$. We say that $[x, y]$ is $M$-Morse if there exists an $M$-Morse geodesic from $x$ to $y$.

\subsection{The Morse boundary}

Let $(X, d)$ be a proper geodesic metric space with basepoint $e$. The $N$-Morse boundary based at $e$, denoted by $\partial_e^N X$, is defined as 
\begin{align*}
     \partial_e^N X = \{\gamma | \text{$\gamma\colon [0, \infty) \to X$ is an $N$-Morse geodesic ray starting at $e$} \}_{/\sim},
\end{align*}

where $\gamma\sim\gamma'$ if they have bounded Hausdorff distance. We call an equivalence class $[\gamma]\in \partial_e^N X$ a Morse direction. For any Morse quasi-geodesic ray $\gamma$ not necessarily starting at $e$, there exists a Morse geodesic ray $\gamma'$ starting at $e$ which is at bounded Hausdorff distance of $\gamma$. In this case we say that $\gamma\sim \gamma'$ and define $[\gamma]$ as $[\gamma']$. If there exists such a ray $\gamma'$ which is $N$-Morse (even if $\gamma$ is not $N$-Morse), we will say that $[\gamma]\in \partial_e^N X$.

\textbf{Notation.} If $\gamma\colon [0, \infty)\to X$ is a Morse geodesic ray, we denote \emph{its endpoint at infinity} $[\gamma]$ by $\gamma^+$. Similarly, if $\gamma \colon \R \to X$ is a Morse geodesic line, we denote its \emph{endpoints} $[\gamma[0, -\infty)]$ and $[\gamma[0, \infty)]$ by $\gamma^-$ and $\gamma^+$ respectively. 

The topology of the $N$-Morse boundary based at $e$ is given by the neighbourhood bases $U_k^N(\gamma)_e = \{[\gamma']| \text{$d(\gamma(t),\gamma'(t))\leq \delta_N$ for all $t\leq k$}\}$. Where $\delta_N $ is the hyperbolicity constant of $N$ as defined above.

In \cite{CH17} Cordes and Hume prove that the $N$-Morse boundary is compact. 

\begin{lemma}[\cite{CH17}]\label{lemma:strata_are_compact}
    Let $N$ be a Morse gauge, then $\partial_e^N X$ is compact. 
\end{lemma}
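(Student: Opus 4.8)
The plan is to run the classical Arzel\`{a}--Ascoli argument on geodesic rays issuing from the basepoint. Write $\mathcal{G}_e^N$ for the set of $N$-Morse geodesic rays $\gamma\colon[0,\infty)\to X$ with $\gamma(0)=e$, topologised by uniform convergence on compact subsets of $[0,\infty)$; this topology is induced by the metric $\rho(\gamma,\gamma')=\sum_{k\geq 1}2^{-k}\min\{1,\sup_{t\leq k}d(\gamma(t),\gamma'(t))\}$, so $\mathcal{G}_e^N$ is metrisable. There is a natural map $\pi\colon\mathcal{G}_e^N\to\partial_e^N X$, $\gamma\mapsto[\gamma]$, and it is surjective because, by definition, every class in $\partial_e^N X$ has an $N$-Morse representative starting at $e$. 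Since a continuous image of a compact space is compact, it suffices to prove that $\pi$ is continuous and that $\mathcal{G}_e^N$ is compact.

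Continuity of $\pi$ is immediate from the description of the neighbourhood bases $U_k^N(\gamma)_e$: if $\gamma_n\to\gamma$ in $\mathcal{G}_e^N$, then $d(\gamma_n(t),\gamma(t))\to 0$ uniformly on $[0,k]$ for each $k$, so eventually $[\gamma_n]\in U_k^N(\gamma)_e$. For the compactness of $\mathcal{G}_e^N$, note that every element is $1$-Lipschitz and sends $t$ into the compact ball $\overline{B}(e,t)$ (here properness of $X$ is used); hence $\mathcal{G}_e^N$ is equicontinuous and pointwise relatively compact, and Arzel\`{a}--Ascoli shows that its closure inside the space of all continuous maps $[0,\infty)\to X$ is compact. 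It therefore remains to check that $\mathcal{G}_e^N$ is closed, i.e.\ that a uniform-on-compacta limit $\gamma$ of a sequence $\gamma_n\in\mathcal{G}_e^N$ again lies in $\mathcal{G}_e^N$. The relations $d(\gamma_n(s),\gamma_n(t))=|s-t|$ and $\gamma_n(0)=e$ pass to the limit, so $\gamma$ is a geodesic ray from $e$, and the one substantive point is that $\gamma$ is again $N$-Morse.

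To see this, let $\sigma\colon[0,L]\to X$ be a $(q,Q)$-quasi-geodesic with $\sigma(0)=\gamma(s)$ and $\sigma(L)=\gamma(t)$, $s\leq t$, and put $\epsilon_n=\max\{d(\gamma_n(s),\gamma(s)),\,d(\gamma_n(t),\gamma(t))\}$, so $\epsilon_n\to 0$. Prepending a geodesic from $\gamma_n(s)$ to $\gamma(s)$ and appending one from $\gamma(t)$ to $\gamma_n(t)$ turns $\sigma$ into a $(q,Q+2\epsilon_n)$-quasi-geodesic $\sigma_n$ from $\gamma_n(s)$ to $\gamma_n(t)$. Since $\gamma_n$ is $N$-Morse, the image of $\sigma$ is contained in that of $\sigma_n$, hence in the closed $N(q,Q+2\epsilon_n)$-neighbourhood of $\gamma_n[s,t]$, and therefore in the closed $\bigl(N(q,Q+2\epsilon_n)+\mathrm{Haus}(\gamma_n[s,t],\gamma[s,t])\bigr)$-neighbourhood of $\gamma[s,t]$. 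Letting $n\to\infty$, the Hausdorff distance tends to $0$, and since a Morse gauge is continuous in its second coordinate $N(q,Q+2\epsilon_n)\to N(q,Q)$; thus $\sigma$ lies in the closed $r$-neighbourhood of $\gamma[s,t]$ for every $r>N(q,Q)$, hence in the closed $N(q,Q)$-neighbourhood. So $\gamma$ is $N$-Morse, $\mathcal{G}_e^N$ is closed and hence compact, and consequently $\partial_e^N X=\pi(\mathcal{G}_e^N)$ is compact.

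I expect the only genuinely delicate point to be the last one, that the limiting geodesic remains $N$-Morse \emph{for the same gauge} $N$: this is precisely where the continuity of Morse gauges in the second coordinate (built into the definition) is needed, since the additive constants of the approximating quasi-geodesics only converge to, rather than equal, $(q,Q)$. Everything else is the standard Arzel\`{a}--Ascoli package together with an unwinding of the neighbourhood-basis topology on $\partial_e^N X$.
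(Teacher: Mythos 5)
Your proof is correct and follows the standard route: the paper does not prove this lemma itself (it is cited from [CH17]), and the Arzel\`a--Ascoli argument you give, reducing compactness of $\partial_e^N X$ to compactness of the space of $N$-Morse rays from $e$ plus closedness of that space under uniform-on-compacta limits, is exactly the argument of the cited source; your closedness step is precisely \cref{lemma:convergence_of_N_morse} (Lemma A.6 of [CSZ23]), reproved inline. The only nitpick is bookkeeping: concatenating the two end-segments makes $\sigma_n$ a $(q,Q+4\epsilon_n)$-quasi-geodesic rather than $(q,Q+2\epsilon_n)$ in the worst case, which changes nothing since the additive constant still tends to $Q$.
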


The Morse boundary of $(X, e)$ is the direct limit 
\begin{align}
    \partial_e X = \lim_{\xrightarrow[\mathcal{M}]{}} \partial_e^M{X},
\end{align}
which is endowed with the direct limit topology. 
As a topological space, the Morse boundary does not depend on the basepoint $e$, and thus, when the basepoint is irrelevant, we denote it by $\mb X$.

In \cite{CD16} the converse to \cref{lemma:strata_are_compact} is proven.

\begin{lemma}[Lemma 4.1 of \cite{CD16}]\label{lemma:compact_subsets}
    Let $K\subset \mb X$ be compact, then there exists a Morse gauge $N$ such that $K\subset \partial_e^N X$.
\end{lemma}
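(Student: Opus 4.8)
The plan is to argue by contradiction: from a compact $K$ that is contained in no single stratum I would extract an infinite subset of $K$ that is closed and discrete in $\mb X$, which is impossible since $K$ is compact.

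So assume $K\subseteq\mb X$ is compact and $K\not\subseteq\partial_e^M X$ for every Morse gauge $M$. Using that the pointwise maximum of two Morse gauges is again a Morse gauge, I would recursively build points $x_n\in K$ and Morse gauges $N_1\le N_2\le\cdots$ so that $x_n\in\partial_e^{N_n}X$ while $x_{n+1}\notin\partial_e^{N_n}X$: take any $x_1\in K$ and any $N_1$ with $x_1\in\partial_e^{N_1}X$ (such $N_1$ exists since $K\subseteq\mb X$); at step $n+1$, use $K\not\subseteq\partial_e^{N_n}X$ to choose $x_{n+1}\in K\setminus\partial_e^{N_n}X$, pick a gauge $N'$ with $x_{n+1}\in\partial_e^{N'}X$, and set $N_{n+1}\define\max(N_n,N')$. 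Since $\partial_e^{N_m}X\subseteq\partial_e^{N_n}X$ for $m\le n$, the point $x_n$ lies in none of $\partial_e^{N_1}X,\dots,\partial_e^{N_{n-1}}X$; in particular the $x_n$ are pairwise distinct, so $A\define\{x_n:n\in\N\}$ is an infinite subset of $K$, and $A\cap\partial_e^{N_m}X\subseteq\{x_1,\dots,x_m\}$ for each $m$.

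The heart of the argument is to show that $A$ is closed and discrete in $\mb X$. Because $\mb X$ carries the direct limit topology of the strata and each $\partial_e^N X$ is compact, hence $T_1$, by \cref{lemma:strata_are_compact}, it is enough to check that $A$ meets every stratum $\partial_e^M X$ in a finite set: then every subset of $A$ meets every stratum in a closed set and is therefore closed in $\mb X$, so $A$ is closed and every point of $A$ is isolated in $A$. For gauges $M$ with $M\le N_m$ for some $m$ this is immediate, since $A\cap\partial_e^M X\subseteq A\cap\partial_e^{N_m}X\subseteq\{x_1,\dots,x_m\}$. To rule out the remaining possibility --- a gauge $M$ dominated by no $N_m$ for which $\partial_e^M X$ contains infinitely many of the $x_n$ --- I would again use compactness of $\partial_e^M X$: such a subsequence would converge inside $\partial_e^M X$, hence in $\mb X$, to a point $x\in\partial_e^M X\cap K$, and one then compares this limit with the explicit neighbourhood bases $U_k^N(\cdot)_e$ of points of the $\partial_e^{N_n}X$ to contradict the escaping property $x_{n+1}\notin\partial_e^{N_n}X$. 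Making this last step precise --- e.g.\ by choosing the gauges $N'$ in the recursion judiciously --- is the technical crux of the proof, and it is exactly here that compactness of the $N$--Morse strata does essential work.

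Finally, $A$ is infinite, contained in the compact set $K$, and closed in $\mb X$; being closed in $K$ it is compact, and being discrete it must then be finite --- a contradiction. Hence $K\subseteq\partial_e^N X$ for some Morse gauge $N$. The main obstacle, as indicated above, is verifying that the constructed ``escaping'' set is closed and discrete in the direct limit topology of $\mb X$, and not merely in the strata one writes down explicitly.
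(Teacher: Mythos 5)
The paper does not prove this statement; it is quoted from Cordes--Durham \cite{CD16}, so there is no internal proof to compare against. Judged on its own, your argument has a genuine gap, and it is exactly the step you flag as the ``technical crux''. Your contradiction requires that $A=\{x_n\}$ meet \emph{every} stratum $\partial_e^M X$ in a finite set, but the recursion only controls the gauges $N_1\le N_2\le\cdots$ built along the way. The directed set of all Morse gauges is uncountable and admits no countable cofinal chain (already $\N\to\N$ under pointwise domination does not: diagonalise against any candidate chain), so there are in general gauges $M$ dominated by no $N_m$. Worse, nothing in your construction prevents \emph{all} of the $x_n$ from lying in a single such stratum: the condition $x_{n+1}\notin\partial_e^{N_n}X$ is perfectly compatible with $x_{n+1}\in\partial_e^M X$ for one fixed $M$ incomparable to every $N_n$. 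In that case a subsequence of $(x_n)$ converges in the compact stratum $\partial_e^M X$, hence in $\mb X$ (the inclusion of a stratum is continuous), so $A$ is neither closed nor discrete and no contradiction is reached. The proposed rescue --- comparing the limit with the neighbourhood bases $U_k^{N_n}(\cdot)_e$ to contradict $x_{n+1}\notin\partial_e^{N_n}X$ --- does not work, because convergence inside $\partial_e^M X$ says nothing about membership in the unrelated strata $\partial_e^{N_n}X$.

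The missing idea is quantitative rather than order-theoretic. From the assumption that $K$ lies in no single stratum, first extract a fixed pair of quasi-geodesic constants $(q,Q)$ at which Morseness is unbounded over $K$: for $x\in\mb X$ let $m_x(q,Q)$ be the least $D$ such that some geodesic representative of $x$ based at $e$ has all $(q,Q)$-quasi-geodesics with endpoints on it staying $D$-close to it. If $\sup_{x\in K}m_x(q,Q)$ were finite for every $(q,Q)$, these suprema (which are automatically increasing in $(q,Q)$) would assemble, after the standard continuity adjustment, into a single Morse gauge $N$ with $K\subset\partial_e^N X$. So choose $(q,Q)$ and $x_n\in K$ with $m_{x_n}(q,Q)\to\infty$. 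Now every gauge $M$ is handled at once: any $x\in\partial_e^M X$ satisfies $m_x(q,Q)\leq M(q,Q)$, a single finite number, so $\{x_n\}\cap\partial_e^M X$ is finite for every $M$. From there your closed-discrete-subset-of-a-compactum contradiction goes through verbatim; this is essentially the argument of Lemma 4.1 of \cite{CD16}.
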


 A triangle is \emph{$\delta$-slim} if each side is contained in the $\delta$-neighbourhood of the
other two sides. The following lemma states that Morse triangles are slim. A proof for the finite case can be found in \cite{Cordes} and in the general case in \cite{CCM19}.

\begin{lemma}[Morse triangles, Lemma 2.3 of \cite{CCM19}]\label{lemma:triangles}
There exists a function $f \colon \mc M\to \mc M$ such that the following holds. Let $(\alpha, \beta, \gamma)$ be a geodesic triangle with vertices in $\mb X \cup X$. If the sides $\alpha$ and $\beta$ are $M$-Morse, then $\gamma$ is $f(M)$-Morse and contained in the $\delta_M$-neighbourhood of $\alpha\cup\beta$.
\end{lemma}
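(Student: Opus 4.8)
The plan is to prove both conclusions first for \emph{finite} triangles, i.e.\ when all three vertices lie in $X$, with constants depending only on $M$, and then to recover the general case (vertices in $\mb X$) by a limiting argument. For the finite case, label the vertices $A,B,C$ so that $\alpha=[A,B]$ and $\beta=[B,C]$ are the $M$--Morse sides and $\gamma=[A,C]$ is the third side, and call $B$ the apex shared by the two Morse sides. The key idea for slimness is to \emph{project the apex onto} $\gamma$: let $m\in\gamma$ be a nearest point of $\gamma$ to $B$, and consider the two concatenations $[A,m]\ast[m,B]$ and $[C,m]\ast[m,B]$, where $[A,m],[C,m]\subseteq\gamma$. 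I claim each is a $(3,0)$--quasi-geodesic. Indeed, if $u$ lies on the $\gamma$--piece and $v$ on $[m,B]$, then minimality of $m$ gives $d(u,B)\ge d(m,B)$ while $v$ lies on a geodesic to $B$, so $d(v,B)=d(m,B)-d(m,v)$; hence $d(u,v)\ge d(u,B)-d(v,B)\ge d(m,v)$, and trivially $d(u,v)\ge d(u,m)-d(m,v)$. Together these give $d(u,v)\ge\max\{d(m,v),\,d(u,m)-d(m,v)\}\ge\tfrac13\big(d(u,m)+d(m,v)\big)$, and with the upper bound $d(u,v)\le d(u,m)+d(m,v)$ this is exactly a $(3,0)$--quasi-geodesic (for points in a common geodesic piece the estimate is immediate).

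Since $[A,m]\ast[m,B]$ is a $(3,0)$--quasi-geodesic with endpoints $A,B$ on the $M$--Morse geodesic $\alpha$, the Morse property places it in the closed $M(3,0)$--neighbourhood of $\alpha$; symmetrically $[C,m]\ast[m,B]$ lies in the $M(3,0)$--neighbourhood of $\beta$. Hence $\gamma=[A,m]\cup[m,C]$ lies in the $M(3,0)$--neighbourhood of $\alpha\cup\beta$, and since $\delta_M\ge 8M(3,0)$ this already yields the claimed slimness. It remains to show $\gamma$ is $f(M)$--Morse. The plan here is to exhibit $\gamma$ at bounded Hausdorff distance from a uniform-quality Morse quasi-geodesic and then invoke the standard fact that a geodesic at bounded Hausdorff distance from an $N$--Morse quasi-geodesic is itself $N'$--Morse, with $N'$ depending only on $N$ and the distance. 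Concretely, let $b\in\alpha$, $c\in\beta$ be nearest points to $m$; by the previous paragraph $d(b,m),d(c,m)\le M(3,0)$, so $d(b,c)\le 2M(3,0)$. Set $\sigma=[A,b]\ast[b,c]\ast[c,C]$, a concatenation of two subsegments of the $M$--Morse geodesics $\alpha,\beta$ (each $M$--Morse, since subgeodesics of $M$--Morse geodesics are $M$--Morse) joined by a connector of length $\le 2M(3,0)$. The fellow-travelling established above shows $\sigma$ and $\gamma$ are Hausdorff close, and the control at the junction $m$ bounds the backtracking of $\sigma$ there, so that $\sigma$ is a uniform-quality Morse quasi-geodesic (a $(5,0)$--quasi-geodesic after reparametrisation). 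The perturbation fact then gives that $\gamma$ is $f(M)$--Morse for an $f$ depending only on $M$; bookkeeping these constants is exactly what produces the remaining terms $M(5,0)$ and $M(1,2M(5,0))$ in the definition of $\delta_M$.

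To remove the hypothesis that the vertices lie in $X$, I would approximate each ideal vertex by points marching to infinity along the corresponding $M$--Morse ray or line, form the resulting finite triangles, apply the finite case to obtain third sides that are uniformly slim and uniformly $f(M)$--Morse, and pass to a limit. Properness together with the Arzel\`a--Ascoli theorem produces a limiting geodesic with the prescribed (possibly ideal) endpoints, and the main technical point---the part I expect to be the real obstacle---is to check that the slimness constant $\delta_M$ and the gauge $f(M)$ survive the limit. This amounts to a closedness statement for the Morse condition under pointwise limits (a pointwise limit of $f(M)$--Morse geodesics is again $f(M)$--Morse, using that $M$ is continuous in its second argument), together with the verification that the limiting side genuinely joins the intended ideal endpoints rather than escaping to infinity; the latter is where the definition of the Morse boundary topology and the compactness of the strata $\partial_e^{N}X$ are used. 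Once this closedness under limits is in place, both conclusions transfer from the finite triangles to the general triangle with the same $\delta_M$ and the same $f(M)$.
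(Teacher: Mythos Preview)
The paper does not supply its own proof of this lemma: it is quoted verbatim as Lemma~2.3 of \cite{CCM19}, with the remark that ``a proof for the finite case can be found in \cite{Cordes} and in the general case in \cite{CCM19}.'' So there is no in-paper argument to compare against; the relevant benchmark is the standard proof in those references, and your outline follows it closely. Your nearest-point projection of the apex $B$ onto $\gamma$, the verification that the two broken paths $[A,m]\ast[m,B]$ and $[C,m]\ast[m,B]$ are $(3,0)$--quasi-geodesics, and the conclusion that $\gamma\subset\mc N_{M(3,0)}(\alpha\cup\beta)\subset\mc N_{\delta_M}(\alpha\cup\beta)$ reproduce exactly the argument in \cite{Cordes}; the limiting procedure for ideal vertices, together with the closedness of the Morse condition under limits (which in this paper is \cref{lemma:convergence_of_N_morse}), is precisely how \cite{CCM19} upgrades to the general case.

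One point to tighten: your claim that the concatenation $\sigma=[A,b]\ast[b,c]\ast[c,C]$ is a $(5,0)$--quasi-geodesic is not correct as stated. The connector $[b,c]$ has length $\le 2M(3,0)$, so the additive constant of $\sigma$ must depend on $M$; one obtains at best a $(1,C(M))$-- or $(3,C(M))$--quasi-geodesic, not a $(5,0)$ one. This does not break the argument --- a geodesic Hausdorff-close to a uniformly Morse quasi-geodesic is uniformly Morse, and the gauge of $\sigma$ depends only on $M$ --- but the specific constants $M(5,0)$ and $M(1,2M(5,0))$ appearing in $\delta_M$ do not arise from this particular construction. They come from the version of the slimness/Morseness argument in \cite{CCM19}, and you should either track the bookkeeping more carefully to match them or simply assert the existence of $f(M)$ without tying it to those exact terms. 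Apart from this cosmetic issue, your sketch is correct and aligned with the cited proofs.
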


\begin{lemma}[Lemma A.6 of \cite{CSZ23}]\label{lemma:convergence_of_N_morse} Let $N$ be a Morse gauge and let $(\gamma_n)$ be a sequence of $N$-Morse geodesic rays starting at a point $x$ and converging (uniformly on compact sets) to $\gamma$. Then $\gamma$ is an $N$-Morse geodesic ray.
\end{lemma}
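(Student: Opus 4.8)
The plan is to check separately that the limit $\gamma$ is a geodesic ray and that it is $N$-Morse in the sense of the definition of Morse recalled above.

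That $\gamma$ is a geodesic ray is immediate: each $\gamma_n\colon[0,\infty)\to X$ is unit speed, i.e.\ $d(\gamma_n(s),\gamma_n(t))=|s-t|$ for all $s,t\ge 0$, and since $\gamma_n(s)\to\gamma(s)$ and $\gamma_n(t)\to\gamma(t)$ this identity passes to the limit. Hence $\gamma\colon[0,\infty)\to X$ is a unit-speed geodesic ray based at $x$.

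For Morseness, fix $q\ge 1$ and $Q\ge 0$, and let $\sigma\colon[a,b]\to X$ be an arbitrary $(q,Q)$-quasi-geodesic with $\sigma(a)=\gamma(s)$ and $\sigma(b)=\gamma(t)$ for some $s\le t$; I must show that every point of $\sigma$ lies within $N(q,Q)$ of $\gamma[s,t]$. The idea is to transplant $\sigma$ onto the rays $\gamma_n$, where Morseness is available by hypothesis. Fix $\eps>0$ and choose $n$ so large that $d(\gamma_n(u),\gamma(u))\le\eps$ for every $u\in[0,t]$; this is possible because $\gamma_n\to\gamma$ uniformly on the compact interval $[0,t]$. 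Let $\sigma_n$ be the path obtained by prepending to $\sigma$ a geodesic from $\gamma_n(s)$ to $\gamma(s)$ and appending a geodesic from $\gamma(t)$ to $\gamma_n(t)$. A direct estimate shows that concatenating geodesic segments of length at most $\eps$ to either end of a $(q,Q)$-quasi-geodesic produces a $(q,Q+4\eps)$-quasi-geodesic; thus $\sigma_n$ is a $(q,Q+4\eps)$-quasi-geodesic from $\gamma_n(s)$ to $\gamma_n(t)$. Since $\gamma_n$ is $N$-Morse, $\sigma_n$---and in particular $\sigma$, which is a subpath of $\sigma_n$---lies in the closed $N(q,Q+4\eps)$-neighbourhood of $\gamma_n[s,t]$, and the latter is contained in the closed $(N(q,Q+4\eps)+\eps)$-neighbourhood of $\gamma[s,t]$ by the choice of $n$.

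To conclude, fix a point $p$ on $\sigma$. The previous paragraph shows $d(p,\gamma[s,t])\le N(q,Q+4\eps)+\eps$ for every $\eps>0$. Since $N$ is increasing and continuous in its second coordinate, letting $\eps\to 0^+$ gives $d(p,\gamma[s,t])\le N(q,Q)$. As $p$, $\sigma$, and $(q,Q)$ were arbitrary, $\gamma$ is $N$-Morse, completing the proof. The only point requiring care is the bookkeeping on quasi-geodesic constants in the transplanting step, together with the appeal to continuity of $N$ in its second argument: it is precisely this continuity that lets the additive errors of size $O(\eps)$ be absorbed in the limit, and without it the statement would fail for a ``sharp'' gauge $N$.
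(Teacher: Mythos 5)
Your proof is correct, and it is essentially the standard argument for this statement; the paper itself does not prove the lemma but imports it as Lemma A.6 of \cite{CSZ23}, where the proof proceeds by the same transplantation-and-limit scheme. You also correctly isolate the one delicate point, namely that continuity of $N$ in the second coordinate is exactly what allows the gauge to remain $N$ (rather than degrading) after passing to the limit.
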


Note that not only do the rays $(\gamma_n)$ converge to $\gamma$, but also the Morse directions $(\gamma_n^+)$ converge to $\gamma^+$ in the Morse boundary. 

The following is part of Lemma 2.8 in \cite{zbinden2023morse} summarising basic properties of Morse rays.
\begin{lemma}[ Lemma 2.8 (ix) in \cite{zbinden2023morse} ]\label{lem_neibMorse}
 Let $\gamma$ be an M-Morse quasi-geodesic and $\gamma'$ be a finite $C$-quasi-geodesic segment with endpoints in the $C$-neighbourhood of $\gamma$. Then $\gamma'$ is $M'$-Morse where $M'$ depends only on $M$ and $C$. 

\end{lemma}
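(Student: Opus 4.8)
The plan is to find a subsegment $\sigma$ of $\gamma$ that is at Hausdorff distance at most some $E=E(M,C)$ from $\gamma'$, and then to transfer the Morse property from $\sigma$ — which is $M$-Morse, being a subsegment of the $M$-Morse quasi-geodesic $\gamma$ — across this bounded perturbation to $\gamma'$.

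\textbf{Step 1: the nearby subsegment.} Let $p,q$ be the endpoints of $\gamma'$ and choose $\bar p,\bar q$ on the image of $\gamma$ with $d(p,\bar p),d(q,\bar q)\le C$; let $\sigma$ be the subsegment of $\gamma$ between $\bar p$ and $\bar q$. First I would note that the concatenation $\eta$ of a geodesic $[\bar p,p]$, the quasi-geodesic $\gamma'$, and a geodesic $[q,\bar q]$ is again a $(C',C')$-quasi-geodesic with $C'$ depending only on $C$, since prepending and appending segments of bounded length to a quasi-geodesic changes its constants in a controlled way. As $\eta$ has its endpoints on $\gamma$ and $\gamma$ is $M$-Morse, $\eta$ — hence $\gamma'$ — lies in the closed $M(C',C')$-neighbourhood of $\sigma$. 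For the reverse inclusion I would run the standard coarse-monotonicity argument: assigning to each point of $\eta$ a point of $\sigma$ within distance $M(C',C')$ gives, because $\sigma$ is a $(C,C)$-quasi-geodesic sharing its endpoints with $\eta$, a coarsely Lipschitz assignment onto the parameter interval of $\sigma$ which is coarsely surjective (it reaches near both endpoints of $\sigma$); this forces every point of $\sigma$ to lie within a constant $R_1=R_1(M,C)$ of $\eta$, hence within $R_1+C$ of $\gamma'$. Taking $E:=\max\{M(C',C'),\,R_1+C\}$, which depends only on $M$ and $C$, the segments $\sigma$ and $\gamma'$ are at Hausdorff distance at most $E$.

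\textbf{Step 2: transferring Morseness.} Let $\beta$ be any $(r,R)$-quasi-geodesic with both endpoints $a,b$ on $\gamma'$. By Step 1 there are $a',b'\in\sigma$ with $d(a,a'),d(b,b')\le E$, and prepending $[a',a]$ and appending $[b,b']$ to $\beta$ produces a $(r,R')$-quasi-geodesic $\hat\beta$ from $a'$ to $b'$ with $R'$ depending only on $R$ and $E$. Since $\hat\beta$ has its endpoints on $\sigma$ and $\sigma$ is $M$-Morse, $\hat\beta$ lies in the closed $M(r,R')$-neighbourhood of $\sigma$, hence in the closed $(M(r,R')+E)$-neighbourhood of $\gamma'$; as $\beta$ is a sub-path of $\hat\beta$, the same containment holds for $\beta$. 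Therefore $\gamma'$ is $M'$-Morse with $M'(r,R):=M(r,R'(R,E))+E$ — enlarged slightly, if needed, to a genuine Morse gauge — and $M'$ depends only on $M$ and $C$, as claimed.

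\textbf{Where the difficulty lies.} The steps that concatenate bounded segments with quasi-geodesics and then appeal directly to the Morse property of $\gamma$ are routine constant-tracking. The one genuinely non-formal point is the reverse Hausdorff estimate in Step 1 — that $\sigma$ stays close to $\gamma'$, not merely the other way around — because $M$-Morseness of $\gamma$ a priori only controls quasi-geodesics lying near $\gamma$, not $\gamma$ itself lying near a quasi-geodesic. I expect the coarse-monotonicity and coarse-surjectivity argument establishing this to be the technical heart of the proof, with everything else being bookkeeping.
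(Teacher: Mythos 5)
This lemma is not proved in the paper at all: it is quoted verbatim from Lemma~2.8(ix) of \cite{zbinden2023morse}, so there is no in-paper argument to compare yours against. On its own merits, your argument is the standard one and is essentially correct: you sandwich $\gamma'$ between a subsegment $\sigma$ of $\gamma$ at controlled Hausdorff distance (the Morse property of $\gamma$ applied to the extended quasi-geodesic $\eta$ gives one inclusion, and the coarse-monotonicity/coarse-surjectivity of the nearest-point assignment gives the other), and then push the Morse property of $\sigma$ across this bounded perturbation. You also correctly identify the reverse Hausdorff estimate as the only non-formal step, and the argument you sketch for it works.

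Two small caveats. First, with the paper's definition of Morse, an $(r,R)$-quasi-geodesic $\beta$ with endpoints $\gamma'(s)$ and $\gamma'(t)$ must stay in a controlled neighbourhood of the subsegment $\gamma'[s,t]$, not merely of all of $\gamma'$; your Step~2 only delivers the latter. This is repaired with the tools you already have: apply Step~1 to $\gamma'[s,t]$ itself (a $C$-quasi-geodesic segment whose endpoints lie, by the first half of Step~1, within $M(C',C')$ of $\gamma$) to get a subsegment of $\sigma$ at bounded Hausdorff distance from $\gamma'[s,t]$, and feed $\hat\beta$ into the Morse property of $\gamma$ relative to that subsegment. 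Second, your reverse Hausdorff estimate (and the phrase ``$\sigma$ is a $(C,C)$-quasi-geodesic'') implicitly uses the quasi-geodesic constants of $\gamma$, which are not part of the hypothesis; strictly, the gauge $M'$ you produce depends on $M$, $C$ and those constants. In the one place this paper invokes the lemma (the proof of \cref{lemma:close_to_C}), $\gamma$ is a geodesic, so this is harmless, but it should be stated.
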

\subsection{Morseness and groups}

For the rest of this paper, unless noted otherwise $G$ denotes a finitely generated group with finite generating set $S$, $X = \cay(G, S)$ denotes its Cayley graph and $d$ denotes both the path-metric on $X$ and the induced word metric on $G$. 
We note that $G$ is the vertex set of $X$.

The group $G$ acts by left-multiplication on $X$. This induces an action on (quasi-) geodesics of $X$ and hence it extends to an action of $G$ on the Morse boundary $\mb X$. 
Some properties of this action are: 

\begin{itemize}
    \item If $\gamma^+\in \partial_e^N X$, then $g\cdot \gamma^+\in \partial_{g\cdot e}^NX$. However, in general, $g\cdot \gamma^+\not \in \partial_e^N X$.
    \item If $C\subset \mb X$ is connected, then $g\cdot C$ is connected. In particular, if $C\subset \mb X$ is a connected component of $\mb X$, then $g\cdot C$ is a connected component of $\mb X$.
\end{itemize}

Given a point $v\in X$, an $M$-Morse geodesic line $\lambda$ and an element $g\in G$, one might want to construct a geodesic ray $\lambda_{gv}$ which starts at 
$v$, passes close to $gv$ and is $M'$-Morse, where $M'$ depends only on $M$. The following lemma states that such a ray can always be found if $[v, gv]$ is $M$-Morse. Indeed, as proven in \cite[Lemma 2.27]{zbinden2023morse}, one of the rays starting at $v$ and ending in $g\cdot \lambda^+$ or $g\cdot \lambda^-$ satisfies these properties. 
\begin{figure}[H]
    \centering
    \includegraphics[scale=0.7]{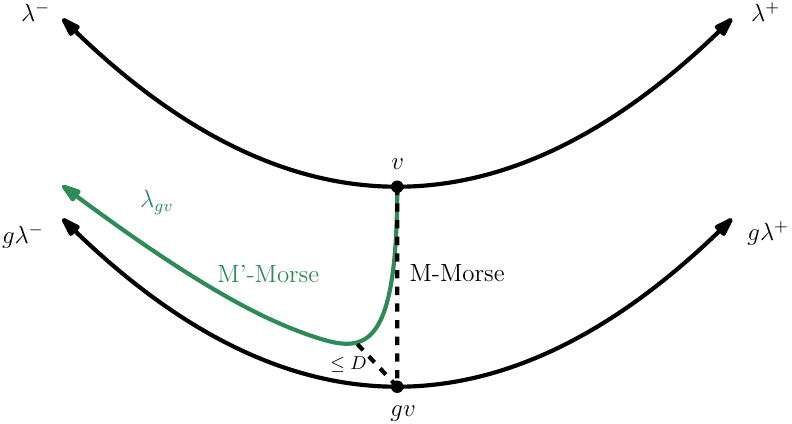}
    \caption{Definition of $\lambda_{gv}$.}
    \label{fig:enter-label}
\end{figure}
\begin{lemma}[Version of Lemma 2.27 of \cite{zbinden2023morse}]\label{lemma:corresponding_ray} For every Morse gauge $M$, there exists a Morse gauge $M'$ and a constant $D$ such that the following holds. Let $v\in X$ be any point and let $\lambda$ be an $M$-Morse geodesic line going through $v$. For every $g\in G$ for which $[v, gv]$ is $M$-Morse, there exists an $M'$-Morse geodesic ray $\lambda_{gv}$ starting at $v$ and satisfying the following properties.
\begin{enumerate}[label = \roman*)]
\item $\lambda_{gv}^+\in \{g\cdot \lambda^-, g\cdot\lambda^+\}$\label{prop:boundary}
\item $d(gv, \lambda_{gv})\leq D$ 
\label{prop:close_to_corresponding_ray}
\end{enumerate}
\end{lemma}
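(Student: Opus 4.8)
The plan is to build $\lambda_{gv}$ by transporting the Morse line $\lambda$ by $g$ and then choosing the correct half of the translated line seen from $v$. Since $\lambda$ is $M$-Morse and passes through $v$, the translate $g\cdot\lambda$ is an $M$-Morse geodesic line passing through $gv$, with endpoints $g\cdot\lambda^-$ and $g\cdot\lambda^+$. First I would fix a geodesic $\sigma=[v,gv]$, which by hypothesis can be chosen $M$-Morse. Concatenating $\sigma$ with each of the two rays of $g\cdot\lambda$ emanating from $gv$ gives two quasi-geodesic rays starting at $v$; the point is that at least one of these is a genuine (unbounded) quasi-geodesic, i.e. does not backtrack, and is Morse with gauge depending only on $M$.

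The key steps, in order: (1) Let $\rho^\pm$ denote the two sub-rays of $g\cdot\lambda$ starting at $gv$ and landing at $g\cdot\lambda^\pm$; each is $M$-Morse since subrays of $M$-Morse geodesics are $M$-Morse. (2) Consider the concatenations $\sigma\cdot\rho^+$ and $\sigma\cdot\rho^-$. Using $\delta_M$-slimness of Morse triangles (\cref{lemma:triangles}) applied to the ideal triangle with vertices $v$, $g\cdot\lambda^-$, $g\cdot\lambda^+$ — whose two sides through $gv$ are $\rho^-$ and $\rho^+$, both $M$-Morse — one shows $gv$ lies within $\delta_M$ of the third side, the bi-infinite geodesic (or ray) realizing $[g\cdot\lambda^-, g\cdot\lambda^+]$ from $v$'s perspective; more usefully, one of $\sigma\cdot\rho^\pm$ is uniformly close to that third side and hence is itself a $C$-quasi-geodesic with $C=C(M)$. (3) Apply \cref{lem_neibMorse}: a finite/infinite $C$-quasi-geodesic with endpoints in the $C$-neighbourhood of an $M$-Morse geodesic is $M'$-Morse with $M'=M'(M,C)=M'(M)$. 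Take $\lambda_{gv}$ to be the $M'$-Morse geodesic ray starting at $v$ at bounded Hausdorff distance from this concatenation; its endpoint at infinity is then $g\cdot\lambda^+$ or $g\cdot\lambda^-$ according to which half we used, giving property \ref{prop:boundary}. (4) For property \ref{prop:close_to_corresponding_ray}: $gv$ is the concatenation point of $\sigma$ and $\rho^\pm$, which lies on the chosen quasi-geodesic, hence within the Hausdorff-distance bound $D=D(M)$ of $\lambda_{gv}$.

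The main obstacle is step (2): verifying that (at least) one of the two concatenations genuinely "goes to infinity" without a bounded-diameter fold at $gv$, and making the quasi-geodesy constant $C$ depend only on $M$. The honest way to see this is the slim-triangle argument: in the ideal triangle $(v, g\lambda^-, g\lambda^+)$ the side $[v,g\lambda^-]$ lies in the $\delta_M$-neighbourhood of $[v,gv]\cup\rho^-$ and similarly for the other side, and since $\rho^+,\rho^-$ diverge, the point $gv$ is a "near-center", forcing one of the concatenations to fellow-travel the corresponding ideal side up to $\delta_M$; that side is $f(M)$-Morse by \cref{lemma:triangles}, and a path fellow-travelling it which is a local quasi-geodesic is a global $C(M)$-quasi-geodesic by the standard Morse local-to-global argument. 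One then feeds this into \cref{lem_neibMorse} with $C=C(M)$, so $M'$ and $D$ depend on $M$ alone, as required. (This is exactly the content of \cite[Lemma 2.27]{zbinden2023morse}, which we invoke.)
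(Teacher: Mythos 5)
Your construction is exactly the one the paper relies on: $\lambda_{gv}$ is (up to bounded Hausdorff distance) whichever of the two geodesic rays from $v$ to $g\cdot\lambda^{\pm}$ passes near $gv$, with Morseness and the distance bound extracted from slim Morse triangles. The paper's own proof simply defers to the construction and constants of \cite[Lemma 2.27]{zbinden2023morse} (taking $M'=M_\lambda$ and $D=\delta_{M_\lambda}$), so your argument is a correct unpacking of that same approach rather than a genuinely different route.
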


\begin{proof}
    Property \ref{prop:boundary}, follows directly from the construction of $\lambda_{gv}$. Using the notation of \cite[Lemma 2.27]{zbinden2023morse}, we can choose $M' = M_\lambda$ and get that $\lambda_{gv}$ is $M'$-Morse. Property \ref{prop:close_to_corresponding_ray} is proven in the proof of \cite[Lemma 2.27]{zbinden2023morse} for $D = \delta_{M_\lambda}$.
\end{proof}

\subsection{Relative Morse boundary} 

Sometimes we consider a pair of spaces $Y\subset X$ and are interested in the Morse boundary of $Y$ viewed as a subset of $X$. We call this the relative Morse boundary.

\begin{definition}[Relative Morse boundary]\label{definition:relative_morse_boundary}
Let $Y$ be a subset of $X$. 
We say that the relative Morse boundary of $Y$ (with respect to $X$), denoted by $(\mb Y, X)$, is the set containing all Morse directions $\gamma^+\in \mb X$ such that there exists $k$ for which $\gamma$ is contained in the $k$-neighbourhood of $Y$. We endow $(\mb Y, X)\subset \mb X$ with the subspace topology.

\textbf{This is different than the definition from \cite{FK22}, however, we show that for undistorted subgroups $H\leq G$, the above definition agrees as a set with the definition from \cite{FK22}}.
\end{definition}

\begin{definition}\label{def:undistorted}
    A subgroup $H$ of a finitely generated group $G$ is undistorted if it is finitely generated and the inclusion $H\to G$ is a quasi-isometric embedding.
\end{definition}

The following lemma shows that as a set, our definition of relative Morse boundary agrees with the definition from \cite{FK22} for undistorted subgroups $H\leq G$.

\begin{lemma}\label{lemma:defintions_agree}
    Let $(X, d_X)$ and $(Y, d_Y)$ be proper geodesic metric spaces and let $\iota \colon Y\to X$ be a quasi-isometric embedding. Let $\gamma \colon [0, \infty) \to X$ be a Morse geodesic ray. The following are equivalent: 
    \begin{enumerate}
        \item $\gamma^+\in (\mb \iota(Y), X)$
        \item there exists a geodesic ray $\lambda\colon [0, \infty)\to Y$ which is Morse in $Y$ and whose image $\iota\circ\lambda$ is at bounded Hausdorff distance from $\gamma$
    \end{enumerate}
\end{lemma}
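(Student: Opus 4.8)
The plan is to prove the two implications separately, using the quasi-isometric embedding $\iota$ to transport geodesics and Morse gauges back and forth between $Y$ and $X$. Throughout, write $q, Q$ for the quasi-isometry constants of $\iota$, and recall that since $\iota$ is a quasi-isometric embedding onto its image, there is a coarse inverse $\rho \colon \iota(Y) \to Y$ (defined up to bounded error, and extended to a coarse map $X \to Y$ if convenient) which is also a quasi-isometry.

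\emph{From (2) to (1).} Suppose $\lambda \colon [0,\infty) \to Y$ is an $N_Y$-Morse geodesic ray in $Y$ and $\iota \circ \lambda$ has bounded Hausdorff distance from $\gamma$. First I would observe that $\iota \circ \lambda$ is a $(q, Q)$-quasi-geodesic in $X$ whose image is $N_Y'$-Morse in $X$, where $N_Y'$ depends only on $N_Y$ and $(q,Q)$; this is the standard fact that quasi-isometric embeddings send Morse quasi-geodesics to Morse quasi-geodesics (it can be extracted from \cref{lem_neibMorse} together with the definition of Morse, or is a routine consequence of slimness of Morse triangles). Since $\gamma$ is at bounded Hausdorff distance from $\iota \circ \lambda$, in particular $\gamma^+ = (\iota\circ\lambda)^+$ and $\gamma$ lies in a bounded neighbourhood of $\iota(Y)$. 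By \cref{definition:relative_morse_boundary} this is exactly $\gamma^+ \in (\mb \iota(Y), X)$.

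\emph{From (1) to (2).} Now suppose $\gamma$ is an $M$-Morse geodesic ray in $X$ with $\gamma^+ \in (\mb\iota(Y), X)$, so $\gamma$ lies in the $k$-neighbourhood of $\iota(Y)$ for some $k$. The plan is: for each integer $n$, pick a point $y_n \in Y$ with $d_X(\iota(y_n), \gamma(n)) \leq k$, and let $\lambda_n \colon [0, \ell_n] \to Y$ be a geodesic in $Y$ from $y_0$ to $y_n$. Because $\iota$ is a $(q,Q)$-quasi-isometric embedding and the $\iota(y_i)$ track $\gamma$ at bounded distance, the concatenation of the $\iota(y_i)$ — and hence $\iota \circ \lambda_n$ — is a $(q', Q')$-quasi-geodesic in $X$ with endpoints within $k$ of $\gamma(0)$ and $\gamma(n)$; by \cref{lem_neibMorse} it is $M''$-Morse in $X$ with $M''$ depending only on $M, k, q, Q$. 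Pulling back through the coarse inverse $\rho$ (another quasi-isometric embedding), $\lambda_n$ is $N_Y$-Morse in $Y$ with $N_Y$ depending only on $M''$ and the constants of $\rho$, hence only on $M$ and $(q,Q,k)$ — uniformly in $n$. Now extract a subsequential limit: the $\lambda_n$ are $N_Y$-Morse geodesic segments in $Y$ starting at $y_0$ with $\ell_n \to \infty$, and since $Y$ is proper, by Arzelà–Ascoli a subsequence converges uniformly on compact sets to a geodesic ray $\lambda \colon [0,\infty) \to Y$, which is $N_Y$-Morse in $Y$ by (the analogue in $Y$ of) \cref{lemma:convergence_of_N_morse}. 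It remains to check $\iota \circ \lambda$ is at bounded Hausdorff distance from $\gamma$: both are (quasi-)geodesic rays in $X$, both $M'''$-Morse for a common gauge, and they fellow-travel along the sampled points $\gamma(n) \approx \iota(y_n) \approx \iota(\lambda(t_n))$; slimness of Morse quasi-geodesic bigons/triangles (\cref{lemma:triangles}) with a common gauge then forces uniformly bounded Hausdorff distance, so in particular $(\iota\circ\lambda)^+ = \gamma^+$. This gives (2).

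\emph{Main obstacle.} The delicate point is the uniform control of the Morse gauge of $\lambda_n$ in $n$, which is what makes the Arzelà–Ascoli compactness argument work: one must be careful that the Morse gauge obtained for $\iota \circ \lambda_n$ in $X$ via \cref{lem_neibMorse} genuinely depends only on $M, k$ and the quasi-isometry constants and not on $n$ or $\ell_n$, and likewise that pulling back through the coarse inverse does not introduce $n$-dependence. A secondary subtlety is the final fellow-travelling estimate: one needs that two Morse quasi-geodesic rays in $X$ with the same gauge that agree at a cofinal set of points up to bounded error are in fact uniformly Hausdorff-close — this is where the hyperbolicity-like behaviour of a fixed Morse stratum (the constant $\delta_{M}$ and \cref{lemma:triangles}) is essential, and it is the step most worth writing out carefully.
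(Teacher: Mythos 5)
Your proposal is correct and follows essentially the same strategy as the paper: the $(2)\Rightarrow(1)$ direction is the same one-line observation, and for $(1)\Rightarrow(2)$ the paper likewise samples points $y_t\in Y$ with $d(\iota(y_t),\gamma(t))\leq k$ and uses the quasi-isometric embedding to see that the resulting path is a Morse quasi-geodesic ray in $Y$. The only divergence is that the paper stops there (leaving implicit the standard replacement of a Morse quasi-geodesic ray by a Hausdorff-close Morse geodesic ray), whereas your Arzel\`a--Ascoli limit of uniformly Morse geodesic segments produces the honest geodesic ray demanded by item (2) explicitly --- a worthwhile extra step, and your uniform-gauge bookkeeping via \cref{lem_neibMorse} is sound.
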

\begin{proof}
    (2) $\implies$ (1): If $\gamma$ is at bounded Hausdorff distance of $\iota\circ\lambda$, then $\gamma$ is at bounded Hausdorff distance from $\iota (Y)$ and hence $\gamma^+\in (\mb \iota(Y), X)$.

    (1) $\implies$ (2): Assume that $\gamma^+\in(\mb \iota(Y), X)$. Hence $\gamma$ is in the $k$-neighbourhood of $\iota(Y)$ for some $k>0$. For all $t\geq 0$, let $y_t\in Y$ be a point such that $d(\iota(y_t), \gamma(t))\leq k$. Consider $\lambda: [0, \infty)\to Y$ defined by $\lambda(t) = y_{t}$. Since $\iota$ is a quasi-isometric embedding and $\gamma$ is a Morse geodesic satisfying $d(\gamma(t),  \iota \circ \lambda(t))\leq k$ for all $t\geq 0$, both $\iota\circ \lambda$ and $\lambda$ are indeed Morse quasi-geodesic rays.
\end{proof}

\section{Point-convergence and its characterisations}\label{sec:point:conv}
In this section, we introduce the notion of point-convergence, which is a notion that quantifies the ``discreteness of families of subsets'' of a topological space. We then show that point-convergence can be viewed as a generalisation of the notion of a null-family, which is a well-studied notion with respect to the Gromov boundary.

In Subsection \ref{sec:hullandCore}, for a group $G$, we introduce the concept of $N$--anti-core and $N$--core which are sets of connected components of the Morse boundary $\mb G$ and a subset of the Cayley graph $X$ respectively. We then relate having finite $N$--anti-core to the set of connected components of $\mb X$ being point-convergent.

\subsection{Null-families and point-convergence}\label{sec:nullfamily}

We denote the cardinality of a set $A$ by $|A|$.
\begin{definition}\label{def:nullfamily}
    Let $Y$ be a compact metric space. A family of subsets $(A_i)_{i\in I}$ of $Y$ is called a \emph{null-family} if for all $\eps > 0$ 
    \begin{align*}
        \abs{\{i\in I | \diam(A_i) > \eps\}}< \infty.
    \end{align*}
\end{definition}

We want to show that if the set of connected components in the Morse boundary is a null-family, then the stabilisers of the connected components behave nicely. However, the Morse boundary is neither compact nor metrizable in general, so we introduce the notion of point-convergence, which coincides with the notion of null-family for compact metric spaces but is defined in the more general setting of topological spaces.

So first we define what the limit of a sequence of subsets is. 

\begin{definition}
    Let $Y$ be a topological space and let $(A_n)_{n\in \N}$ be a sequence of subsets of $Y$. The limit 
    \begin{align*}
        L = \lim_{n\to \infty} A_n
    \end{align*}
    is the set of points $x\in Y$ which can be written as $x = \lim_{n\to \infty} x_n$ for points $x_n\in A_n$.
\end{definition}

We are now ready to define point-convergence.

\begin{definition}\label{definition:point-convergence}
    Let $Y$ be a topological space. A family of subsets $\{A_i\}_{i\in I}$ of $Y$ is \emph{point-convergent} if for all sequences $(A_{i_n})_{n\in \N}$ of distinct elements of $\{A_i\}_{i\in I}$ we have that
    \begin{align}
        \abs{\lim_{n\to \infty} A_{i_n}}\leq 1.
    \end{align}
\end{definition}

\begin{lemma}\label{lemma:null_to_convergence}
    Let $Y$ be a compact metrizable space and let $\{A_i\}_{i\in I}$ be a family of subsets of $Y$. The family $\{A_i\}_{i\in I}$ is a null-family if and only if it is point-convergent.
\end{lemma}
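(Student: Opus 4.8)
The plan is to prove the two implications separately, with the forward direction (null-family $\Rightarrow$ point-convergent) being essentially formal, and the reverse direction (point-convergent $\Rightarrow$ null-family) requiring a compactness argument. Throughout I would fix a metric $\rho$ on $Y$ compatible with the topology, which exists since $Y$ is metrizable.

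For the forward direction, suppose $\{A_i\}_{i\in I}$ is a null-family and let $(A_{i_n})_{n\in\N}$ be a sequence of distinct elements. Suppose for contradiction that $\lim_n A_{i_n}$ contains two distinct points $x\neq y$; set $\eps = \tfrac{1}{3}\rho(x,y) > 0$. By definition of the limit there are $x_n, y_n \in A_{i_n}$ with $x_n\to x$ and $y_n\to y$, so for all large $n$ we have $\rho(x_n,y_n) > \eps$, hence $\diam(A_{i_n}) > \eps$ for infinitely many $n$. Since the $A_{i_n}$ are distinct, this contradicts the null-family condition $\abs{\{i : \diam(A_i) > \eps\}} < \infty$.

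For the reverse direction, I argue by contraposition: assume $\{A_i\}_{i\in I}$ is not a null-family and produce a sequence of distinct elements whose limit has at least two points. Not being a null-family means there is some $\eps > 0$ and infinitely many distinct indices $i_1, i_2, \dots$ with $\diam(A_{i_n}) > \eps$. For each $n$ pick $x_n, y_n \in A_{i_n}$ with $\rho(x_n, y_n) > \eps$. By compactness of $Y$ (using metrizability to pass to sequential compactness), after passing to a subsequence we may assume $x_n \to x$ and $y_n \to y$ for some $x, y \in Y$; then $\rho(x,y) \geq \eps > 0$, so $x \neq y$, and both $x, y \in \lim_n A_{i_n}$ along this subsequence, which consists of distinct elements of the family. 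Hence $\{A_i\}_{i\in I}$ is not point-convergent.

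I expect the main (though still minor) obstacle to be purely bookkeeping: ensuring the subsequence extraction in the reverse direction is done consistently so that a single subsequence of indices works for both $(x_n)$ and $(y_n)$, and confirming that the resulting subfamily is genuinely a sequence of \emph{distinct} elements (which it is, since we started from infinitely many distinct indices and subsequences of such remain distinct). No deep input is needed beyond sequential compactness of compact metrizable spaces; the rest is a direct unwinding of the definitions of $\lim_{n\to\infty} A_n$, of null-family, and of point-convergence.
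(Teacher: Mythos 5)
Your proposal is correct and follows essentially the same argument as the paper: the forward direction by extracting two separated sequences from a two-point limit, and the reverse direction by using (sequential) compactness to extract convergent subsequences from points witnessing large diameter. The only differences are cosmetic (choice of $\eps$ as $\rho(x,y)/3$ versus $d(x,y)$ with a factor of $1/4$, and phrasing the reverse direction as contraposition rather than contradiction).
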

\begin{proof}
    ``$\Rightarrow$''  Assume that $\{A_i\}_{i\in I}$ is a null-family but not point-convergent. By assumption, there exists a sequence $(A_{i_n})_{n\in \N}$ of distinct elements of $\{A_i\}_{i\in I}$ whose limit $L$ has cardinality at least 2. Say $x\neq y\in L$. Let $\eps = d (x, y)$. Since $x$ and $y$ are in the limit, for large enough $n$, there exist points $x_n, y_n\in A_{i_n}$ with $d(x, x_n) < \eps/4$ and $d(y, y_n)< \eps /4$. By triangle inequality $d(x_n, y_n) > \eps/2$. Thus for all large enough $n$, $\diam (A_{i_n}) > \eps/2$, which is a contradiction to  $\{A_i\}_{i\in I}$ being a null-family.

  ``$\Leftarrow$'' Assume that $\{A_i\}_{i\in I}$ is point-convergent but not a null-family. By assumption, there exists $\eps > 0$ and a sequence $(A_{i_n})_{n\in \N}$ of distinct elements of $\{A_i\}_{i\in I}$ such that $\diam(A_{i_n})> \eps$ for all $n$. In other words, for all $n\in \N$ there exist points $x_n, y_n\in A_{i_n}$ with $d(x_n, y_n) >  \eps$. Since $Y$ is compact, up to passing to a subsequence, we can assume that the sequences $(x_n)_n$ and $(y_n)_n$ converge to points $x$ and $y$ respectively. Since $d(x_n, y_n) > \eps$ for all $n$, we have that $d(x, y)\geq \eps$ and most importantly $x\neq y$. By definition, $x$ and $y$ are both in the limit of $(A_{i_n})_{n\in \N}$, a contradiction to point-convergence.
\end{proof}

 The following example illustrates a hyperbolic space (but not group!) $X$ whose set of non-singleton boundary components fail to build a null-family or equivalently to be point-convergent and shows how this failure is related to the fact that the space $X$ is not accessible: 
 \begin{example}\label{example1}
     Let $X$ be a space obtained by gluing a sequence of hyperbolic planes $(H_i)_{i\in\mathbb{N}}$ with base points $(b_i)_{i\in \N}$ on a hyperbolic plane $H_{\infty}$ with basepoint $b$ as follows: for every $i\in\N$, we identify the disk with midpoint $b_i$ and radius $i$ in $H_i$ and the disk with midpoint $b$ and radius $i$ in $H_{\infty}$, as illustrated in Figure~\ref{H}. We denote by $C_i$ the Gromov boundary of $H_i$, which is a circle. We note that $X$ is a proper Gromov-hyperbolic space, and its Gromov boundary is compact and is the union of the circles $C_i$ and $C_{\infty}$. This is depicted in Figure~\ref{H}. 
    \begin{figure}[H]
        \centering
        \includegraphics[scale=0.7]{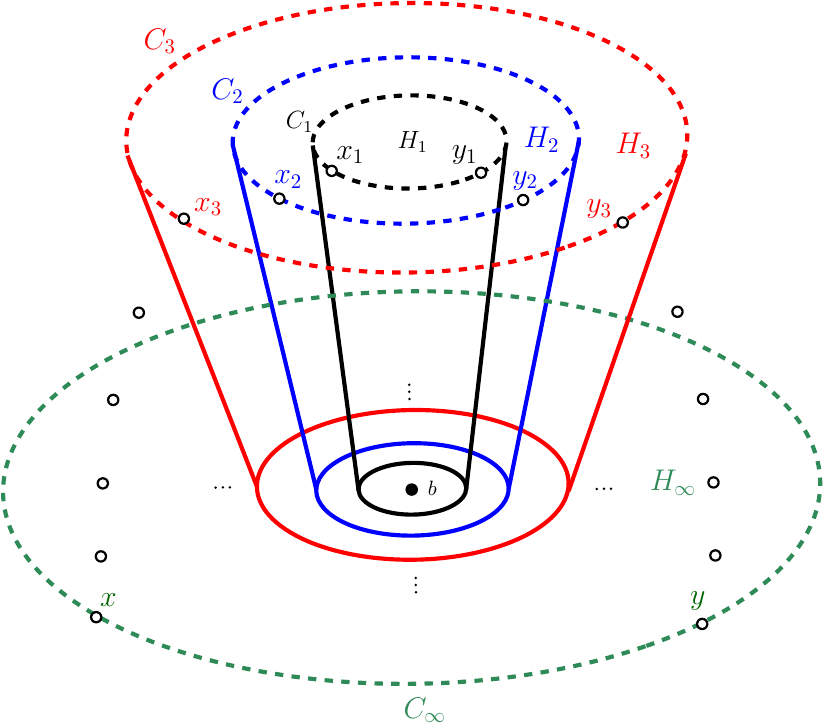}
        \caption{A hyperbolic space whose set of boundary components is not point-convergent.}
        \label{H}
    \end{figure}
  
If we equip the Gromov boundary of $X$ with a visual metric, we see that the diameter of all the circles $C_i$ are identical. As there are infinitely many such circles, the sequence $(C_i)_{i \in \mathbb{N}}$ does not form a null-family. Equivalently, $(C_i)_{i \in \mathbb{N}}$ is not point-convergent. Indeed, the sequences $(x_i)_{i\in \N}$ and $(y_i)_{i\in \N}$ as depicted in Figure~\ref{H} converge to two distinct points $x,y\in C_{\infty}$. If $(C_i)_{i \in \mathbb{N}}$ were point-convergent, the sequences $(x_i)_{i \in \mathbb{N}}$ and $(y_i)_{i \in \mathbb{N}}$ depicted in the figure would converge to a single point rather than to the two pictured distinct points $x$ and $y$.\\
Note that $X$ is not accessible, that is, there does not exist $D>0$ such that every pair of ends of $X$ can be separated by deleting a ball of radius $D$.
 \end{example}

   There is another generalisation of the notion of null-family which was for instance used in \cite{dasgupta2022local} but which is not equivalent to point-convergence: A family of subsets $\mathcal C$ of a topological space $Y$ is \textit{null} if for each open cover $\mc U$ of $Y$, all but finitely many members of $\mc C$ are $\mc U$-small. A set is $\mc U-$small if it is contained in some $U \in \mc U$. 

   \begin{example}
       Consider $Y = (0,  \infty)$. For all $n\in \N$ define $C_n = (n, n+\frac{1}{n})$. We claim that the set $\mc C = \{C_n\}_{n\in \N}$ is point-convergent (this follows since the $C_n$ are intervals whose length tends to $0$) but not a ``null-family'' as used in \cite{dasgupta2022local}. Indeed, consider the open cover $\mc U = \{(x, x+\frac{1}{n^2}) | n\in \N, x\in (n-1, n]\}$. For $n\geq 2$, none of the sets $C_n$ is $\mc U$--small.
   \end{example}

\subsection{The anti-core and the core of a non-singleton component}\label{sec:hullandCore}

Given a connected component $C$, one might want to study the \textit{weak (convex) hull} of $C$. The \textit{weak (convex) hull of a subset} of $\mb G$ consists of the union of all lines with endpoints in the subset. It was in particular used by Swenson~\cite{Swenson} and Cordes--Durham~\cite{CD16} to characterise quasi-convexity respectively stability of subgroups by their actions on the weak convex hull of their limit sets. Here, we introduce the notion of the core, which is a stable subset of the weak hull. Further, we introduce the notion of the anti-core, which is a set of connected components and can be viewed as a dual of the core. 

\begin{definition}[$N$--feasible] \label{def:feasible} Let $v\in X$ be a vertex and let $C$ be a non-singleton connected component of $\mb X$. We say that the pair $(C, v)$ is \emph{$N$--feasible} if there exists an $N$-Morse geodesic line $\lambda$ such that $\lambda(0) = v$ and $\lambda^+, \lambda ^-\in C$. We call $\lambda$ a witness of the pair $(C, v)$. 

We denote by $\comp{v}{N}$ the set of connected components $C$ of $\mb X$ such that $(C, v)$ is $N$--feasible.
\end{definition}

\begin{remark}\label{rem:gactsoncomp}
Observe that if $(C, v)$ is $N$--feasible, then $(g\cdot C, g v)$ is $N$--feasible for all $g\in G$. Thus there is a bijection $f : \comp{v}{N}\to \comp{gv}{N}$ given by $C\mapsto g\cdot C$.
\end{remark}

Morally, if a pair $(C, v)$ is $N$--feasible, we should think that $v$ is in the ``interior'' of $C$ when viewed as a subset of $ \mb X \cup X$. Using the notion of a feasible pair, we introduce the $N$--core, which is a stable (in the sense of \cite{Durham-Taylor}) subset of the weak hull of a connected component $C$.

\begin{definition}[Core]\label{def_Core}
Let $N$ be a Morse gauge and $(C, v)$ be an $N$--feasible pair. The $N$--core of $(C, v)$, denoted by $\core{C, v}{N}$, is the set of vertices $w\in X$ such that $(C, w)$ is $N$--feasible and $[v, w]$ is $N$-Morse.
\end{definition}

Morally one should think of $\core{C, v}{N}$ as the set of vertices $w\in X$ which lie on an $N$-Morse ray starting at $v$ and ending in $C$. The vertex $v$ can be thought of as a chosen basepoint. Dual to the $N$--core, we can define the $N$--anti-core of a feasible pair $(C, v)$. While the $N$-core is a set of vertices (and hence a set of certain translates of $v$), the anti-core is a set of translates of $C$.

\begin{definition}[Anti-core]\label{def_hull}
Let $N$ be a Morse gauge and let $(C, v)$ be an $N$--feasible pair. The $N$--anti-core of $(C, v)$, denoted by $\hull{C, v}{N}$, is the set of translates $g\cdot C$ of $C$ satisfying that $(g\cdot C, v)$ is $N$--feasible and that $[v, gv]$ is $N$-Morse.
\end{definition}

\begin{remark}\label{remark:relation_Core_hull}   
    The $N$--anti-core and the $N$--core are related as follows. If $gv\in \core{C, v}{N}$ for some $g\in G$, then $g^{-1}\cdot C \in \hull{C, v}{N}$.   If $g\cdot C\in \hull{C, v}{N}$, then there exist $h\in G$ such that $h\cdot C = g\cdot C$ and $h^{-1}v\in \core{C, v}{N}$. However, it does not follow that $g^{-1}v\in \core{C, v}{N}$.
\end{remark}

\textbf{Notation:} We denote the set of non-singleton connected components of the Morse boundary $\mb G$ by $\mc C_G$.

\begin{lemma}\label{lemma:point-to-finite-compint}
    Let $\mc A\subset \mc C_G$ be a set of connected components of $\mb G$. If $\mc A$ is point-convergent, then $\comp{v}{N}\cap \mc A$ is finite for all vertices $v\in X$ and all Morse gauges $N$. 
\end{lemma}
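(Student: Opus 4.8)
The plan is to argue by contradiction. Suppose that for some vertex $v$ and some Morse gauge $N$ the set $\comp{v}{N}\cap\mc A$ is infinite, and fix a sequence $(C_n)_{n\in\N}$ of pairwise distinct elements of it. For each $n$, the $N$--feasibility of $(C_n,v)$ supplies a witness: an $N$-Morse geodesic line $\lambda_n\colon\R\to X$ with $\lambda_n(0)=v$ and $\lambda_n^+,\lambda_n^-\in C_n$. The goal is to extract a ``limit line'' from the $\lambda_n$ and show that point-convergence forces its two endpoints to coincide, which no geodesic line permits.

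First I would produce the limit line. Each $\lambda_n$ is $1$-Lipschitz with $\lambda_n(0)=v$, and $X$ is proper, so by Arzel\`a--Ascoli and a diagonal argument there is a subsequence (which I do not relabel) converging uniformly on compact subsets of $\R$ to a map $\lambda_\infty\colon\R\to X$; passing to the limit in $d(\lambda_n(s),\lambda_n(t))=|s-t|$ shows $\lambda_\infty$ is a geodesic line with $\lambda_\infty(0)=v$. The forward ray $r^+=\lambda_\infty|_{[0,\infty)}$ and the reversed backward ray $r^-(t)=\lambda_\infty(-t)$ are uniform-on-compacts limits of the $N$-Morse geodesic rays $\lambda_n|_{[0,\infty)}$ and $t\mapsto\lambda_n(-t)$, all starting at $v$. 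By \cref{lemma:convergence_of_N_morse} (and the remark following it), $r^+$ and $r^-$ are $N$-Morse and, in $\mb X$, we have $\lambda_n^+\to\lambda_\infty^+:=[r^+]$ and $\lambda_n^-\to\lambda_\infty^-:=[r^-]$.

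Next I would invoke point-convergence. The $C_n$ are pairwise distinct members of the point-convergent family $\mc A$; since $\lambda_n^+\in C_n$ and $\lambda_n^+\to\lambda_\infty^+$, we get $\lambda_\infty^+\in\lim_{n\to\infty}C_n$, and likewise $\lambda_\infty^-\in\lim_{n\to\infty}C_n$. By \cref{definition:point-convergence}, $\abs{\lim_{n\to\infty}C_n}\le 1$, hence $\lambda_\infty^+=\lambda_\infty^-$. Now $r^+$ and $r^-$ are geodesic rays issuing from the common point $v$ with $[r^+]=[r^-]$, so they lie at finite Hausdorff distance $R$ of one another. For any $t\ge 0$ choose $s$ with $d(r^+(t),r^-(s))\le R$; then $|s-t|=\bigl|d(v,r^-(s))-d(v,r^+(t))\bigr|\le R$, so $d(\lambda_\infty(t),\lambda_\infty(-t))=d(r^+(t),r^-(t))\le 2R$. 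But $\lambda_\infty$ is a geodesic line, so $d(\lambda_\infty(t),\lambda_\infty(-t))=2t$; taking $t>R$ yields a contradiction. Therefore $\comp{v}{N}\cap\mc A$ is finite.

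I expect the only delicate point to be bookkeeping rather than any genuine obstacle: one must make sure a single subsequence simultaneously controls both ends of the lines, and that uniform-on-compacts convergence of the rays genuinely upgrades to convergence of the associated Morse directions in the direct-limit topology of $\mb X$ — which is exactly what \cref{lemma:convergence_of_N_morse} and its succeeding remark are for. The conceptual core, namely that point-convergence collapses the two ends of the limit line while a geodesic line always has distinct (indeed linearly divergent) ends, is short.
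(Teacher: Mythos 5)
Your proof is correct and follows essentially the same route as the paper's: extract a subsequence of witness lines converging to a limit geodesic line through $v$, use \cref{lemma:convergence_of_N_morse} to place both endpoints of the limit line in $\lim_{n\to\infty}C_n$, and conclude that point-convergence fails. The only difference is that you spell out the Arzel\`a--Ascoli extraction and the final step that the two ends of a geodesic line are genuinely distinct Morse directions, which the paper leaves implicit.
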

\begin{proof}
     Assume that there exists a vertex $v\in X$ and a Morse gauge $N$ for which $\comp{v}{N}\cap \mc A$ is infinite. We show that $\mc A$ is not point-convergent. Since $\comp{v}{N}\cap \mc A$ is infinite, there exist sequences $(C_n)_{n}$ of pairwise distinct connected components $C_n\in \mc A$ and $(\lambda_n)_n$ of $N$-Morse geodesic lines such that $(C_n, v)$ is $N$--feasible and $\lambda_n$ is a witness of $(C_n, v)$ for all $n\in \N$. Up to passing to a subsequence, we may assume that the lines $\lambda_n$ converge to a geodesic line $\lambda$ going through $v$. By \cref{lemma:convergence_of_N_morse}, $\lambda$ is $N$-Morse and the sequences $(\lambda_n^+)_n$ and $(\lambda_n^-)_n$ converge to $\lambda^+$ and $\lambda^-$ respectively. Consequently, both $\lambda^-$ and $\lambda^+$ lie in $\lim_{n\to \infty} C_n$, implying that $\mc A$ is not point-convergent.
\end{proof}

\begin{lemma}\label{lemma:orbit-convergence_implies_finite_hull}
    Let $N$ be a Morse gauge and let $(C, v)$ be an $N$--feasible pair. If $\{g\cdot C\}_{g\in G}$ is point-convergent, then $\hull{C, v}{M}$ is finite for all Morse gauges $M\geq N$.
\end{lemma}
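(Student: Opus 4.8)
The plan is to argue by contradiction: suppose $\hull{C,v}{M}$ is infinite for some Morse gauge $M \geq N$, and produce a sequence of distinct translates of $C$ whose limit contains at least two points, contradicting point-convergence of $\{g\cdot C\}_{g\in G}$. Since $\hull{C,v}{M}$ is infinite, there is a sequence $(g_n\cdot C)_n$ of pairwise distinct elements of $\hull{C,v}{M}$; by definition of the $M$--anti-core, for each $n$ the pair $(g_n\cdot C, v)$ is $M$--feasible, witnessed by an $M$-Morse geodesic line $\lambda_n$ through $v$ with $\lambda_n^\pm \in g_n\cdot C$, and moreover $[v, g_n v]$ is $M$-Morse.

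First I would use properness of $X$ together with \cref{lemma:convergence_of_N_morse} to extract, after passing to a subsequence, a limit: the $\lambda_n$ all pass through the fixed vertex $v$ and are uniformly $M$-Morse, so up to subsequence they converge uniformly on compact sets to a geodesic line $\lambda$ through $v$, which is $M$-Morse, and the endpoints $\lambda_n^+ \to \lambda^+$, $\lambda_n^- \to \lambda^-$ in $\mb X$ (using the remark following \cref{lemma:convergence_of_N_morse} about convergence of Morse directions). The key point is then that $\lambda^+ \neq \lambda^-$: this holds because $\lambda$ is a genuine bi-infinite geodesic line (a uniform-on-compacts limit of geodesic lines through $v$ with both rays of unbounded length), and the two ends of a bi-infinite geodesic line give distinct points in the Morse boundary. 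Since $\lambda_n^\pm \in g_n\cdot C$, both $\lambda^+$ and $\lambda^-$ lie in $\lim_{n\to\infty} g_n\cdot C$.

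The remaining subtlety is to ensure the translates $g_n\cdot C$ appearing in the limit are genuinely \emph{distinct} as required by \cref{definition:point-convergence} — but this is immediate since the $g_n\cdot C$ were chosen pairwise distinct as elements of $\hull{C,v}{M}$. Hence $\{g\cdot C\}_{g\in G}$ admits a sequence of distinct elements whose limit has cardinality at least $2$, contradicting point-convergence. The main obstacle I anticipate is the extraction of the limit line and the verification that its two ends are distinct Morse directions: one must be careful that the convergence $\lambda_n \to \lambda$ is genuinely on all compact subsets of $\R$ (not just on the positive ray), which follows from a diagonal argument using local finiteness of $X$, and that $\lambda^+ \neq \lambda^-$ for a bi-infinite geodesic, which is standard but should be stated cleanly. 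Everything else is bookkeeping with the definitions of $N$--feasible and $M$--anti-core, and the observation that $M \geq N$ is not actually needed beyond guaranteeing $(C,v)$ remains $M$--feasible so that $\hull{C,v}{M}$ is well-defined.
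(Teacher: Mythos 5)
Your proof is correct and is essentially the paper's argument: the paper deduces the lemma in one line from \cref{lemma:point-to-finite-compint} (noting $\hull{C,v}{M}\subseteq \comp{v}{M}\cap\{g\cdot C\}_{g\in G}$), and the proof of that auxiliary lemma is exactly your compactness/limit-line argument, which you have simply inlined. The only cosmetic difference is that you spell out the (standard, and implicitly used in the paper) fact that the two ends of the limit geodesic line are distinct.
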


\begin{proof}
    The set $\hull{C, v}{M}$ is a subset of $\comp{v}{M}\cap \{g\cdot C\}_{g\in G}$. Hence the statement follows from \cref{lemma:point-to-finite-compint}
\end{proof}

\begin{lemma}\label{lemma:full_conv}
    The following properties are equivalent: 
    \begin{enumerate}
        \item There exists a vertex $v\in X$ for which $\comp{v}{N}$ is finite for all Morse gauges $N$
        \item $\comp{v}{N}$ is finite for all vertices $v\in X$ and Morse gauges $N$.
        \item The family $\mc C_G$ of non-singleton connected components of $\mb G$ is point-convergent.
    \end{enumerate}
\end{lemma}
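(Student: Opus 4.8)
The plan is to establish the cycle of implications $(2)\Rightarrow(1)\Rightarrow(3)\Rightarrow(2)$; the implications $(2)\Rightarrow(1)$ is trivial, so the real content is in the other two. For $(1)\Rightarrow(3)$, I would argue by contrapositive: suppose $\mc C_G$ is not point-convergent, so there is a sequence $(C_n)$ of pairwise distinct non-singleton components and points $x_n,y_n\in C_n$ with $x_n\to x$, $y_n\to y$ and $x\neq y$. Since $\{x,x_1,x_2,\dots\}$ and $\{y,y_1,y_2,\dots\}$ are compact (convergent sequences together with their limits), \cref{lemma:compact_subsets} gives a single Morse gauge $N_0$ with all these points in $\partial_e^{N_0} X$. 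Now for each $n$, pick Morse geodesic rays $\alpha_n,\beta_n$ from $v$ representing $x_n,y_n$; since $x_n, y_n$ lie in the same connected component $C_n$ and components are path-connected in this setting — actually I should be careful here: connectedness of $C_n$ does not immediately give a Morse \emph{line} with both endpoints in $C_n$. The cleaner route is: because $x_n\neq y_n$ are both represented by $N_0$-Morse rays from $v$ and $x_n, y_n$ eventually separate (their representatives diverge), the bi-infinite concatenation of a ray to $x_n$ and a ray to $y_n$ is a quasi-geodesic; using \cref{lemma:triangles} and \cref{lem_neibMorse}, after straightening it is an $N_1$-Morse geodesic line $\lambda_n$ through (a bounded neighbourhood of) $v$ with $\lambda_n^\pm \in \{x_n,y_n\}$, where $N_1$ depends only on $N_0$. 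A basepoint-adjustment (replacing $v$ by a nearby vertex on $\lambda_n$, or invoking \cref{lemma:convergence_of_N_morse}-style control) makes $\lambda_n$ actually pass through $v$ at the cost of enlarging $N_1$ to some $N$ depending only on $N_0$. Then $C_n\in\comp{v}{N}$ for all $n$, and the $C_n$ are pairwise distinct, so $\comp{v}{N}$ is infinite, contradicting $(1)$.

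For $(3)\Rightarrow(2)$, I would apply \cref{lemma:point-to-finite-compint} with $\mc A = \mc C_G$: since $\mc C_G$ is point-convergent, $\comp{v}{N}\cap\mc C_G = \comp{v}{N}$ is finite for every vertex $v$ and every Morse gauge $N$ (noting $\comp{v}{N}$ consists of non-singleton components by definition of $N$--feasibility, hence lies in $\mc C_G$). This is immediate given the lemma, so no new work is needed here.

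The main obstacle is the construction in $(1)\Rightarrow(3)$ of a \emph{single} Morse gauge $N$ (independent of $n$) witnessing feasibility of all the $(C_n,v)$: one must convert ``$x_n,y_n$ lie in a common connected component and are both $N_0$-Morse directions from $v$'' into ``there is an $N$-Morse geodesic \emph{line} through $v$ with endpoints $x_n$ and $y_n$.'' The point is that $x_n$ and $y_n$ being distinct $N_0$-Morse directions means their geodesic representatives from $v$ eventually move apart by more than $2\delta_{N_0}$, so the union of the two rays is a (two-sided) quasi-geodesic with uniform constants; \cref{lem_neibMorse} and \cref{lemma:triangles} then bound the Morse gauge of a geodesic line asymptotic to $x_n$ and $y_n$ purely in terms of $N_0$, and a uniform bound on how far that line passes from $v$ lets us reroute it through $v$ with a further uniform loss. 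Once this uniformity is in hand, the rest is bookkeeping with \cref{lemma:convergence_of_N_morse}, exactly as in the proof of \cref{lemma:point-to-finite-compint} run in reverse. Finally $(2)\Rightarrow(1)$ holds vacuously, closing the cycle.
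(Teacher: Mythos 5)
Your $(3)\Rightarrow(2)$ (via \cref{lemma:point-to-finite-compint} applied to $\mc A=\mc C_G$) and the trivial $(2)\Rightarrow(1)$ are fine and agree with the paper. The gap is in $(1)\Rightarrow(3)$, at the step where you force the geodesic line $\lambda_n$ with endpoints $x_n,y_n$ to ``actually pass through $v$''. Feasibility (\cref{def:feasible}) requires a genuine geodesic line $\lambda$ with $\lambda(0)=v$; a geodesic line that merely comes within bounded distance of $v$ need not admit any companion geodesic line through $v$ with the same (or even nearby) endpoints in $C_n$, and enlarging the Morse gauge does not help because the witness must be a geodesic, not a quasi-geodesic. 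What your construction actually yields, after pigeonholing among the finitely many vertices in a bounded ball (properness), is \emph{some} vertex $w$ --- not necessarily the given $v$ --- with $\comp{w}{N}$ infinite. That negates $(2)$ but not $(1)$, so in your cycle $(2)\Rightarrow(1)\Rightarrow(3)\Rightarrow(2)$ the implication $(1)\Rightarrow(3)$ remains unproved. The paper closes exactly this hole by first establishing $(1)\Rightarrow(2)$ via \cref{rem:gactsoncomp}: $G$ acts transitively on the vertices of its Cayley graph and $C\mapsto g\cdot C$ is a bijection $\comp{v}{N}\to\comp{gv}{N}$, so finiteness at one vertex propagates to all. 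With that in hand, the contrapositive argument only needs to produce \emph{some} vertex with infinite $\comp{\cdot}{N'}$, which the paper finds among the finitely many vertices within $2\delta_{N'}$ of the point $\gamma(10\delta_{N'})$ on the limiting line. You should either add the homogeneity step or restructure your implications accordingly.

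A secondary issue: the uniformity you need (a single Morse gauge and a uniform bound on how far the lines pass from the chosen point) does not follow from $x_n\neq y_n$ alone --- ``their representatives eventually move apart'' gives constants depending on $n$. It follows from the convergence $x_n\to x$, $y_n\to y$ with $x\neq y$: the rays to $x_n$ (resp.\ $y_n$) subconverge to the ray to $x$ (resp.\ $y$) by \cref{lemma:convergence_of_N_morse}, and the latter two separate at a definite time. This is precisely how the paper locates where each $\gamma_i$ passes. Note also that the uniform Morse gauge $N'=f(N)$ for the lines comes for free from \cref{lemma:triangles} applied to the ideal triangle with vertex $w$ and ideal vertices $x_i,y_i$, so there is no need to discuss quasi-geodesic constants of the concatenated rays at all.
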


\begin{figure}[H]
    \centering
    \includegraphics[scale=0.7]{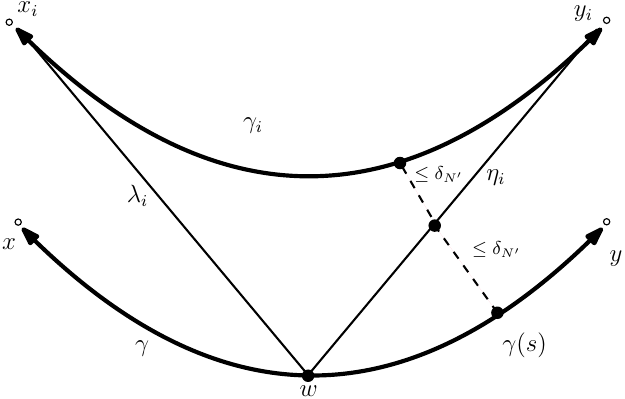}
    \caption{Proof of Lemma \ref{lemma:full_conv}}
    \label{fig:full_conv}
\end{figure}
\begin{proof}
    $(1)\implies (2):$ This follows directly from \cref{rem:gactsoncomp}.
    
    $(2) \implies (3):$ Assume that $\mc C_G$ is not point-convergent. We will show that there exists a vertex $v\in X$ and Morse gauge $N$ for which $\comp{v}{N}$ is infinite. Since $\mc C_G$ is not point-convergent, there exists a sequence $(C_i)_{i \in \mathbb N}$ of pairwise distinct connected components $C_i\in \mc C_G$ and two convergent sequences $(x_i)_{i \in \mathbb N}$ and $(y_i)_{i \in \mathbb N}$ of Morse directions satisfying $x_i,y_i \in C_i$ and whose limits $x=\lim_{i \to \infty}{x_i}$ and $y=\lim_{i \to \infty}{y_i}$ are distinct. Let $\gamma$ be a Morse geodesic line in $X$ with $\gamma^-= x$ and $\gamma^+=y$ and let $w = \gamma(0)$ be a point on $\gamma$. The sets $\{x_i\}_{i\in \N}\cup \{x\}$ and $\{y_i\}_{i\in \N}\cup \{y\}$ are compact. By \cref{lemma:compact_subsets}, compact sets are contained in the $N$-Morse boundary for some $N$. Implying that there exists a Morse gauge $N$ such that any ray starting at $w$ and ending in $x_i, y_i, x$ or $y$ is $N$-Morse. For each $i$ let $\gamma_i$ be a Morse geodesic line with endpoints $\gamma_i^- = x_i$ and $\gamma_i^+ = y_i$ and let $\lambda_i$ and $\eta_i$ be Morse geodesic rays starting at $w$ and ending in $x_i$ and $y_i$ respectively. This is depicted in Figure~\ref{fig:full_conv}. The lines $\eta_i, \lambda_i$ and $\gamma$ are $N$-Morse and by \cref{lemma:triangles} about Morse triangles, the lines $\gamma_i$ are $N' = f(N)$-Morse for all $i$. 

    We now try to find a point $v$ which infinitely many of the $\gamma_i$ pass through.
    Let $w' = \gamma(s)$ be a point on $\gamma$ with $s = 10 \delta_{N'}$, the point $v$ will be in the $2\delta_{N'}$--neighbourhood of $w'$. Up to passing to a subsequence, the $\lambda_i$ converge (uniformly on compact sets) to a geodesic ray $\lambda$, which is $N$-Morse by \cref{lemma:convergence_of_N_morse} and satisfies $\lambda^+ = \gamma^- = x$. Analogously, the $\eta_i$ converge to an $N$-Morse geodesic ray $\eta$ with $\eta^+ = y = \gamma^+$. 
    
    Consequently, there exists $i_0$ such that for all $i > i_0$, $d(\lambda_i, \gamma(s)) > 3\delta_{N'}$ and there exists a point $p_i$ on $\eta_i$ such that $d(p_i, \gamma(s))\leq \delta_{N'}$. By \cref{lemma:triangles} about Morse triangles, for all $i> i_0$, we have that $d(\gamma_i, p_i)\leq \delta_{N'}$ or $d(\lambda_i, p_i)<\delta_{N'}$. The latter contradicts $d(\lambda_i, \gamma(s)) > 3\delta_{N'}$. Thus, $d(\gamma_i, p_i)\leq\delta_{N'}$ implying $d(\gamma_i, \gamma(s))\leq2\delta_{N'}$. Since the graph $X$ is proper, by potentially passing to a subsequence, we can assume that there exists a vertex $v\in X$ in the $2\delta_{N'}$-neighbourhood of $\gamma(s)$ such that all lines $\gamma_i$ for $i> i_0$ pass through $v$. Consequently, $C_i\in \comp{v}{N'}$ for all $i> i_0$, which implies that $\comp{v}{N'}$ is infinite and hence concludes the proof.

    $(3)\implies (1):$ This follows from \cref{lemma:point-to-finite-compint}.
\end{proof}

\section{Stabilisers of connected components with finite anti-core}\label{sec:stabiliser}

In this section we prove \cref{thm:stabilizer}, which states that if $\hull{C, v}{M}$ is finite for all $M$, then $C$ is the relative Morse boundary of its stabiliser. Together with \cref{lemma:orbit-convergence_implies_finite_hull}, this implies \cref{thm:main}, which states that if the orbit of a non-singleton connected component $C$ is point-convergent, then the relative Morse boundary of its stabiliser $H$ is the connected component $C$. 

There are two key steps in the proof of \cref{thm:stabilizer}. The first one is \cref{lemma:close_to_stab}, which uses the relation between the $N$--core and $N$--anti-core to show that finiteness of the $N$--anti-core implies that the $N$--core is contained in a neighbourhood of the stabiliser of $C$. The second one is \cref{lemma:close_to_C}, which states that $(\mb H, G)\subset C$. The main proof strategy for \cref{lemma:close_to_C} is to show that for every ray $\gamma$ with endpoint in $(\mb H, G)$, there exists a sequence of rays whose endpoints lie in $C$ and converge to $\gamma^+$.

We start by showing that if a Morse ray $\gamma$ ends in $C$, then it is contained in the neighbourhood of the $M'$--core of $(C, v)$ for some Morse gauge $M'$.

\begin{lemma}\label{lemma:close_to_Core}
    Let $N$ be a Morse gauge and let $(C, v)$ be an $N$-feasible pair. Let $M\geq N$ be a Morse gauge and let $\gamma$ be an $M$-Morse geodesic ray starting at $v$ and whose endpoint $\gamma^+$ is in $C$. There exists a Morse gauge $M'$ and constant $D$ only depending on $M$ such that $\gamma\subset {\mc N}_D \left(\core{C, v}{M'}\right)$.
\end{lemma}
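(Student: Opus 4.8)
The plan is to show that every vertex $w$ on $\gamma$ lies close to some vertex $w'$ for which $(C,w')$ is $M'$-feasible and $[v,w']$ is $M'$-Morse, where $M'$ depends only on $M$. Since $\gamma^+ \in C$ and $C$ is a non-singleton connected component, we can pick a second point $\xi \in C$ with $\xi \neq \gamma^+$; moreover, using \cref{lemma:compact_subsets} applied to the compact set $\{\gamma^+, \xi\}$ together with the fact that $\gamma$ is already $M$-Morse, we may choose a Morse gauge $M_0$ depending on $M$ (and the choice of $\xi$) such that both $\gamma$ and some geodesic ray $\eta$ from $v$ to $\xi$ are $M_0$-Morse. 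Then $\gamma$ together with $\eta$ fellow-travels a geodesic line $\lambda$ from $\gamma^+$ to $\xi$ through $v$ — more precisely, by \cref{lemma:triangles} about Morse triangles (applied to the ideal triangle $(\gamma,\eta,\lambda)$) the line $\lambda$ is $f(M_0)$-Morse and $\gamma$ lies in the $\delta_{M_0}$-neighbourhood of $\lambda \cup \eta$. This already witnesses $N$-feasibility-type data at $v$, but we need witnesses through points far out along $\gamma$.

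The key step is the following: for a vertex $w = \gamma(t)$ with $t$ large, I want a Morse line through (a point near) $w$ with both endpoints in $C$. The natural candidate is built from $\gamma[t,\infty)$ (which ends at $\gamma^+ \in C$) concatenated with a geodesic segment from $w$ back to $v$ and then the ray $\eta$ out to $\xi$. The concatenation $[w,v] \cup \eta$ is a quasi-geodesic ray with endpoint $\xi \in C$ which passes within bounded distance of $v$; since $[w,v]$ is a subsegment of the $M_0$-Morse geodesic $\gamma$ and $\eta$ is $M_0$-Morse and they share the endpoint $v$, this concatenation is a $C_0$-quasi-geodesic with $C_0$ depending only on $M_0$, hence by \cref{lem_neibMorse} it is $M_1$-Morse with $M_1$ depending only on $M_0$. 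Straightening it to a geodesic ray $\eta_w$ from $w$ to $\xi$, I get (by another application of \cref{lem_neibMorse} / the fellow-travelling estimates) that $\eta_w$ is $M_2$-Morse and stays within a bounded distance $D$ of $w$. Then $\gamma[t,\infty) \cup \{w\} \cup \eta_w$ — straightened to a geodesic line $\mu_w$ through a point $w''$ near $w$ — is an $M'$-Morse line with $\mu_w^+ = \gamma^+ \in C$ and $\mu_w^- = \xi \in C$, so $(C, w'')$ is $M'$-feasible; and $[v,w'']$ is $M'$-Morse because $w''$ is within $D$ of $w \in \gamma$ and $\gamma$ is $M_0$-Morse (use \cref{lem_neibMorse} once more, or \cref{lemma:triangles}). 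Hence $w'' \in \core{C,v}{M'}$ and $d(w, w'') \le D$, giving $\gamma \subset \mc N_D(\core{C,v}{M'})$.

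The main obstacle I expect is bookkeeping of the Morse gauges: each straightening/concatenation step costs a controlled worsening of the gauge, and I must make sure all of these ultimately depend only on $M$ (and the fixed auxiliary choice of $\xi \in C$, which in turn is governed by $M$ via \cref{lemma:compact_subsets}), not on $t$ or on $w$. The uniformity in $t$ is the crucial point and is exactly what lets me take a single $M'$ and $D$ for the whole ray; it follows because the quasi-geodesic constants of the concatenations $[w,v]\cup\eta$ and $\gamma[t,\infty)\cup[w,v]\cup\eta$ are bounded purely in terms of $M_0$ (the angle/overlap estimates at $v$ and $w$ are local and gauge-controlled), independently of how far out $w$ sits. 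A secondary subtlety is that $\core{C,v}{M'}$ is defined as a set of \emph{vertices} of $X$; since $X$ is a Cayley graph every point is within distance $1$ of a vertex, so after the above argument one enlarges $D$ by $1$ and replaces $w''$ by a nearby vertex, noting that the $M'$-feasibility and $M'$-Morseness of $[v,w'']$ are stable under moving $w''$ a bounded amount (absorbing the change into a slightly larger $M'$ via \cref{lem_neibMorse}).
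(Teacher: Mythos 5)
Your overall strategy---connect $\gamma^+$ to a second boundary point $\xi$ of $C$ by a Morse geodesic line, argue that points on that line lie in the core, and use slimness of Morse triangles to place $\gamma$ in a neighbourhood of core points---is the same as the paper's. But there are two genuine gaps in the execution, and both are repaired by the single choice the paper makes: take $\xi$ to be an endpoint of the $N$-Morse witness $\lambda$ of the pair $(C,v)$ (replacing $\lambda$ by its inverse if necessary so that $\lambda^-\neq\gamma^+$), rather than an arbitrary point of $C$.

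First, the gauge bookkeeping does not close up as you claim. For an arbitrary $\xi\in C$, the gauge $M_0$ you extract from \cref{lemma:compact_subsets} for $\{\gamma^+,\xi\}$ depends on $\xi$, and nothing ties $\xi$ to $M$; the parenthetical ``which in turn is governed by $M$'' is unjustified, while the lemma requires $M'$ and $D$ to depend only on $M$. With $\xi=\lambda^-$ the ray from $v$ to $\xi$ is a sub-ray of the $N$-Morse witness, hence $M$-Morse since $M\geq N$, and the dependence collapses to $M$ alone. Second, and more seriously, the concatenation $[w,v]\cup\eta$ is not a uniform quasi-geodesic: the rays $\gamma$ and $\eta$ both emanate from $v$ and may fellow-travel for an arbitrarily long time before diverging, and this overlap is not controlled by the Morse gauges, so the claim that ``the angle/overlap estimates at $v$ are local and gauge-controlled'' is false. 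For $w=\gamma(t)$ inside that fellow-travelling region the geodesic line from $\xi$ to $\gamma^+$ need not pass anywhere near $w$, so the straightened line $\mu_w$ has no point $w''$ near $w$ and your per-point construction breaks down exactly there. The repair is to not straighten per point: such $w$ are $\delta$-close to $\eta$, and if $\eta=\lambda|_{(-\infty,0]}$ its points already lie in $\core{C, v}{N}$ because the witness line itself passes through them and its subsegments to $v$ are $N$-Morse. One application of \cref{lemma:triangles} to the ideal triangle with sides $\lambda|_{(-\infty,0]}$, $\gamma$, and the line $\eta'$ from $\lambda^-$ to $\gamma^+$ then gives $\gamma\subset{\mc N}_{\delta_{M'}}\left(\lambda\cup\eta'\right)$ with every point of $\lambda\cup\eta'$ in $\core{C, v}{M'}$; this is precisely the paper's proof.
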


\begin{figure}[H]
    \centering
    \includegraphics[scale=0.7]{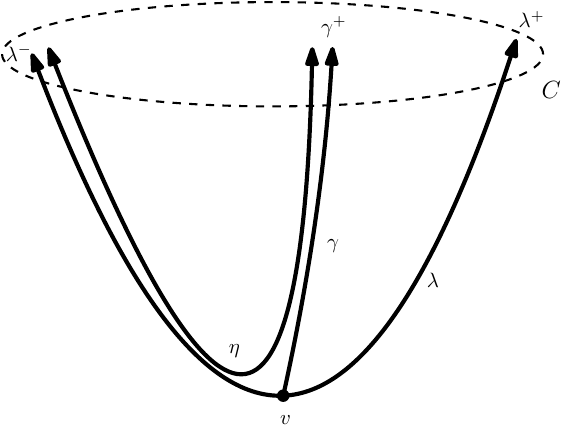}
    \caption{Proof of \cref{lemma:close_to_Core}}
    \label{fig:lemma4_1}
\end{figure}

\begin{proof}
Let $\lambda$ be an $N$-Morse witness of $(C, v)$. By potentially replacing $\lambda$ by its inverse, we may assume that $\lambda^-\neq \gamma^+$. Every point $x$ on $\lambda$ satisfies that $[x, v]$ is $N$-Morse and hence $x\in \core{C, v}{N}$. 

Let $\eta$ be a Morse geodesic line satisfying $\eta^- = \lambda^-$ and $\eta^+ = \gamma^+$. This is depicted in Figure \ref{fig:lemma4_1}. \cref{lemma:triangles} about Morse triangles shows that $\eta$ is $M'' = f(M)$-Morse and any point $w$ on $\eta$ satisfies that $[v, w]$ is $M' = f(M'')$-Morse.
Thus, any point $w$ on $\eta$  satisfies $w\in \core{C, v}{M'}$. \cref{lemma:triangles} about Morse triangles shows that $\gamma$ is in the $D = \delta_{M'}$-neighbourhood of $\eta\cup\lambda$ and hence in the $D$-neighbourhood of $\core{C, v}{M'}$. Since $D$ and $M'$ only depend on $M$, the result follows.
\end{proof}

\begin{lemma}\label{lemma:close_to_stab}
    Let $N$ be a Morse gauge and let $(C, v)$ be an $N$-feasible pair. Let $H\leq G$ be the stabiliser of $C$. If $\hull{C, v}{N}$ is finite, then there exists a constant $k> 0$ such that $\core{C, v}{N} \subseteq {\mc N}_k ( H)$.
\end{lemma}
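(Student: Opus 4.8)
The plan is to exploit the fact that $\core{C,v}{N}$ consists of \emph{vertices} of $X$, which are precisely the elements of $G$, and to translate the feasibility data attached to such a vertex back to the basepoint $v$. This way each vertex of the $N$--core produces an element of the $N$--anti-core $\hull{C,v}{N}$, and finiteness of the latter then pins the vertex down to within a uniformly bounded distance of $H$.

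In detail, I would argue as follows. Let $w\in\core{C,v}{N}$. Since $w$ is a vertex and the vertex set of $X=\cay(G,S)$ is $G$, write $w=gv$ with $g\define wv^{-1}\in G$. By \cref{def_Core}, $[v,gv]=[v,w]$ is $N$-Morse and the pair $(C,gv)$ is $N$--feasible, so \cref{remark:relation_Core_hull} yields $g^{-1}\cdot C\in\hull{C,v}{N}$. (Directly: left translation by $g^{-1}$ is an isometry of $X$ commuting with the action on $\mb X$ and preserving Morse gauges, so it carries a witness of $(C,gv)$ to a witness of $(g^{-1}\cdot C,v)$ and the $N$-Morse segment $[v,gv]$ to the $N$-Morse segment $[g^{-1}v,v]=[v,g^{-1}v]$.) Now invoke the hypothesis: $\hull{C,v}{N}$ is finite, so fix representatives $g_1,\dots,g_m\in G$ with $\hull{C,v}{N}=\{g_1\cdot C,\dots,g_m\cdot C\}$. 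Then $g^{-1}\cdot C=g_j\cdot C$ for some $j$, hence $g_j^{-1}g^{-1}$ stabilises $C$, i.e. $h\define gg_j\in H$ and $g=hg_j^{-1}$. Therefore $w=gv=h\,(g_j^{-1}v)$ and, by left-invariance of $d$,
\[
d(w,H)\ \le\ d(w,h)\ =\ d\!\left(h\,g_j^{-1}v,\,h\right)\ =\ d\!\left(g_j^{-1}v,\,e\right).
\]
Putting $k\define\max_{1\le i\le m}d(g_i^{-1}v,e)<\infty$, which does not depend on $w$, we get $w\in{\mc N}_k(H)$; as $w$ was an arbitrary vertex of $\core{C,v}{N}$, this proves $\core{C,v}{N}\subseteq{\mc N}_k(H)$.

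There is really only one subtlety to be careful about, and it is bookkeeping rather than a genuine obstacle: $\hull{C,v}{N}$ is a finite set of \emph{translates} $g\cdot C\subseteq\mb X$, not a finite set of group elements, so one must first choose a single representative $g_i\in G$ for each of these translates before forming the maximum that defines $k$; and one must keep in mind that the $N$--core consists of vertices, i.e. honest elements of $G$, in order to make sense of $w=gv$. Once these are in place, the proof is essentially a one-line application of \cref{remark:relation_Core_hull} together with left-invariance of the word metric; no compactness or convergence argument is needed, the only input being the quantitative finiteness of the anti-core.
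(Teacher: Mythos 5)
Your proof is correct and takes essentially the same route as the paper's: apply \cref{remark:relation_Core_hull} to a vertex $gv\in\core{C, v}{N}$ to obtain $g^{-1}\cdot C\in\hull{C, v}{N}$, fix representatives $g_1,\dots,g_m$ of the finitely many translates, deduce $gg_j\in H$, and conclude by left-invariance of the word metric. Your constant $k=\max_i d(g_i^{-1}v,e)$ equals the paper's $\max_i d(v,g_i)$, and your extra bookkeeping (writing $w=gv$, choosing representatives) only makes explicit what the paper leaves implicit.
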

\begin{proof}
    Assume that $\hull{C, v}{N}$ is finite. That is, $\hull{C, v}{N} = \{g_1\cdot C, \ldots g_n\cdot C\}$ for some elements $g_i\in G$. We show that the statement holds for 
    \[k = \max_{1\leq i\leq n}\{d (v, g_i)\}.\]

    Let $g v\in \core{C, v}{N}$. By \cref{remark:relation_Core_hull}, $g^{-1}\cdot C \in\hull{C, v}{N}$, which implies that $gg_i\in H$ for some $1\leq i \leq n$. Consequently, $d(g v, H)\leq d(gv, gg_i)\leq  d(v, g_i)\leq k$, which concludes the proof.
\end{proof}

\cref{lemma:close_to_Core} and \cref{lemma:close_to_stab} show that if $\hull{C, v}{N}$ is finite for all Morse gauge $M\geq N$, then $C\subset (\mb H, G)$. Indeed, if a Morse geodesic ray $\gamma$ starts in $v$ and ends in $C$, \cref{lemma:close_to_Core} shows that it is contained in a neighbourhood of $\core{C, v}{M}$ for some Morse gauge $M$. Then \cref{lemma:close_to_stab} shows that $\gamma$ is contained in a neighbourhood of $H$, implying that $\gamma^+\in (\mb H, G)$. It remains to show that $(\mb H, G)\subset C$, which is the content of the following lemma.

\begin{lemma} \label{lemma:close_to_C}
Let $N$ be a Morse gauge and let $(C, v)$ be an $N$-feasible pair. Let $H$ be the stabiliser of $C$. We have that $(\mb H, G)\subset C$.    
\end{lemma}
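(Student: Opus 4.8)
The plan is to show that every Morse direction $\xi \in (\mb H, G)$ lies in $C$ by exhibiting a sequence of Morse directions in $C$ that converges to $\xi$; since $C$ is a connected component of $\mb X$, hence closed, this forces $\xi \in C$. So fix a geodesic ray $\gamma$ starting at $v$ with $\gamma^+ = \xi$ and with $\gamma$ contained in the $k$-neighbourhood of $H$ for some $k > 0$. We may assume $\gamma$ is $M$-Morse for some Morse gauge $M \geq N$ (enlarging $N$ if necessary). Let $\lambda$ be a fixed $N$-Morse witness of the feasible pair $(C, v)$, so $\lambda^+, \lambda^- \in C$ and $\lambda(0) = v$.

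First I would produce, for each $n$, an element $h_n \in H$ with $d(\gamma(t_n), h_n) \leq k$ where $t_n \to \infty$, and pull back the witness line: consider $h_n \cdot \lambda$, which is an $N$-Morse geodesic line through $h_n v$ with both endpoints in $h_n \cdot C = C$ (using that $h_n$ stabilises $C$). The key point is that $h_n v$ is within bounded distance ($k$ plus the edge length $d(v, \gamma(\text{vertex nearest } t_n))$, which is uniformly bounded) of $\gamma(t_n)$, so $[v, h_n v]$ is uniformly Morse by \cref{lem_neibMorse}. Now apply \cref{lemma:corresponding_ray} with $g = h_n$: there is an $M'$-Morse geodesic ray $\mu_n = \lambda_{h_n v}$ starting at $v$, with $\mu_n^+ \in \{h_n \cdot \lambda^-, h_n \cdot \lambda^+\} \subseteq C$, and with $d(h_n v, \mu_n) \leq D$, where $M', D$ depend only on the common Morse gauge. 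Thus each $\mu_n^+ \in C$.

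It remains to see $\mu_n^+ \to \xi$. Since $d(h_n v, \gamma(t_n))$ is uniformly bounded and $d(h_n v, \mu_n) \leq D$, the ray $\mu_n$ passes within a uniformly bounded distance of $\gamma(t_n)$; as $\mu_n$ and $\gamma$ both start at $v$, are both uniformly Morse, and $t_n \to \infty$, the slimness of Morse triangles (\cref{lemma:triangles}) forces $\mu_n$ and $\gamma$ to fellow-travel on longer and longer initial segments as $n \to \infty$. Hence $\mu_n \to \gamma$ uniformly on compact sets (up to subsequence one gets a limit ray, which by the fellow-travelling must be $\gamma$ itself), and therefore $\mu_n^+ \to \gamma^+ = \xi$ in $\mb X$ by the remark following \cref{lemma:convergence_of_N_morse}. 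Since each $\mu_n^+ \in C$ and $C$ is closed in $\mb X$, we conclude $\xi \in C$, which gives $(\mb H, G) \subseteq C$.

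The main obstacle I anticipate is the bookkeeping of Morse gauges and the subtlety that $\gamma$ is only close to vertices of $H$ at integer times while $\mu_n^+$ is built from $\lambda_{h_n v}$, whose endpoint is only guaranteed to be $h_n \cdot \lambda^{\pm}$ (the ``wrong'' one is still in $C$, so this is harmless) — one must check that all the invoked constants ($M'$, $D$ from \cref{lemma:corresponding_ray}, and the $\delta$'s from \cref{lemma:triangles}) depend only on a single Morse gauge controlling $\gamma$, $\lambda$, and the segments $[v, h_n v]$, and not on $n$. Granting that uniformity, the convergence argument is routine.
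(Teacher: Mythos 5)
Your proposal is correct and follows essentially the same route as the paper: pick $h_i\in H$ with $d(\gamma(i),h_i)\le k$, note $[v,h_iv]$ is uniformly Morse by \cref{lem_neibMorse}, apply \cref{lemma:corresponding_ray} to get rays $\lambda_{h_iv}$ with endpoints in $C$ passing uniformly close to $\gamma(i)$, and conclude by closedness of $C$. The only cosmetic difference is that the paper cites a lemma of Cordes for the final convergence $\lambda_{h_iv}^+\to\gamma^+$ where you argue directly via slimness of Morse triangles.
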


\begin{figure}[H]
    \centering
    \includegraphics[scale=0.7]{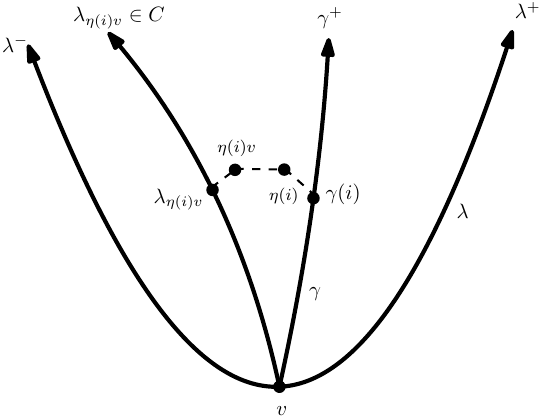}
    \caption{Proof of \cref{lemma:close_to_C}}
    \label{fig:lemma4_3}
\end{figure}
\begin{proof}
Let $\lambda$ be an $N$-Morse witness of the pair $(C, v)$. Let $\gamma$ be an $M$-Morse geodesic ray starting at $v$ and with $\gamma^+\in (\mb H, G)$. Let $k$ be a constant such that $\gamma\subset \mc N_k(H)$. For each integer $i\geq 0$, let $\eta(i)$ be an element of $H$ at distance at most $k$ from $\gamma(i)$. This is depicted in Figure \ref{fig:lemma4_3}.

Since, $d(\gamma(i), \eta(i))\leq k$ we have that the endpoints of $[v, \eta(i)v]$ are in the $d(1, v) + k$-neighbourhood of of $\gamma$. Thus by \cref{lem_neibMorse}, $[v, \eta(i)v]$ is $M'$--Morse, where $M'$ only depends on $d(1, v)$, $k$ and $M$.

Consider the ray $\lambda_{\eta(i)v}$ starting at $v$ as defined in \cref{lemma:corresponding_ray}.
We have $\lambda_{\eta(i)v}^+\in \{\eta(i)\cdot \lambda^{-}, \eta(i)\cdot \lambda^{+}\}$. Since $\eta(i)\in H$, it stabilises $C$. Hence we have that $\lambda_{\eta(i)v}^+ \in C$. 

Furthermore, by \cref{lemma:corresponding_ray} and the triangle inequality, 
\begin{align*}
    d(\gamma(i), \lambda_{\eta(i)v})&\leq d(\gamma(i), \eta(i)) + d(\eta(i), \eta(i)v) + d(\eta(i)v, \lambda_{\eta(i)v})\\
    &\leq k + d(1, v) + D,
\end{align*}
where $D$ is the constant from \cref{lemma:corresponding_ray} and only depends on $M'$.  Thus, for example by \cite[Lemma A.6]{Cordes}, the sequence $(\lambda_{\eta(i)v}^+)_i$ converges to $\gamma^+$. Since connected components are closed and $\lambda_{\eta(i)v}^+\in C$ for all $i$, we have that $\gamma^+\in C$.
\end{proof}

\begin{prop}\label{thm:stabilizer}
     Let $C$ be a non-singleton connected component and $H$ its stabiliser. If there exists a vertex $v$ and a Morse gauge $N$ such that $(C, v)$ is $N$-feasible and for every Morse gauge $M\geq N$ $\hull{C, v}{M}$ is finite, then $(\mb H,  G) = C$. 
\end{prop}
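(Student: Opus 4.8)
The plan is to assemble the three lemmas of this section into the two inclusions $C \subseteq (\mb H, G)$ and $(\mb H, G) \subseteq C$. The second inclusion is immediate: it is exactly the content of \cref{lemma:close_to_C}, which holds for any non-singleton connected component $C$ with stabiliser $H$ and any $N$-feasible pair $(C,v)$, without any finiteness hypothesis. So the work is entirely in proving $C \subseteq (\mb H, G)$ using the assumption that $\hull{C,v}{M}$ is finite for every Morse gauge $M \geq N$.

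For that inclusion, I would take an arbitrary Morse direction $\xi \in C$ and pick an $M$-Morse geodesic ray $\gamma$ starting at $v$ with $\gamma^+ = \xi$ (such a ray exists since $v$ is a vertex of $X$ and $\xi \in \mb X$; enlarging $M$ if necessary we may assume $M \geq N$). Applying \cref{lemma:close_to_Core} to $\gamma$ gives a Morse gauge $M'$ (depending only on $M$, hence in particular $M' \geq N$ after possibly enlarging it, which is harmless since cores only grow with the gauge) and a constant $D$ such that $\gamma \subseteq \mc N_D(\core{C,v}{M'})$. Now invoke the hypothesis at the gauge $M'$: since $\hull{C,v}{M'}$ is finite, \cref{lemma:close_to_stab} yields a constant $k$ with $\core{C,v}{M'} \subseteq \mc N_k(H)$. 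Combining the two containments, $\gamma \subseteq \mc N_{D+k}(H)$, so $\gamma$ lies in a bounded neighbourhood of $H$, which is precisely the definition of $\gamma^+ = \xi \in (\mb H, G)$. This shows $C \subseteq (\mb H, G)$.

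Putting the two inclusions together gives $(\mb H, G) = C$ as sets, and since $(\mb H, G)$ carries the subspace topology from $\mb G$ by \cref{definition:relative_morse_boundary}, the equality is as topological subspaces of $\mb G$. I do not anticipate a genuine obstacle here, as the proposition is essentially a bookkeeping combination of \cref{lemma:close_to_Core}, \cref{lemma:close_to_stab} and \cref{lemma:close_to_C}; the one point requiring a little care is the management of Morse gauges — one must check that the gauge $M'$ produced by \cref{lemma:close_to_Core} still satisfies $M' \geq N$ (so that $(C,v)$ is $M'$-feasible and \cref{lemma:close_to_stab} applies, and so that the finiteness hypothesis covers $M'$), which is arranged simply by replacing $M'$ with $\max\{M', N\}$ since feasibility, cores and anti-cores are all monotone in the gauge.
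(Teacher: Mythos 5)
Your proposal is correct and follows essentially the same route as the paper: \cref{lemma:close_to_C} gives $(\mb H,G)\subseteq C$, and \cref{lemma:close_to_Core} followed by \cref{lemma:close_to_stab} (applied at the gauge $M'$, using the finiteness of $\hull{C,v}{M'}$) gives $C\subseteq(\mb H,G)$. The only difference is that you make explicit the monotonicity-in-the-gauge bookkeeping, which the paper leaves implicit.
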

\begin{proof}
    \cref{lemma:close_to_C} shows that $(\mb H, G)\subset C$. On the other hand, if $\gamma$ is a Morse ray with $\gamma^+\in C$ and starting at $v$, then \cref{lemma:close_to_Core} shows that $\gamma$ is at bounded distance form the $M$--core of $(C, v)$ for some $M$. By \cref{lemma:close_to_stab}, the $M$--core of $(C, v)$ is at bounded distance from $H$ and hence $\gamma^+\in (\mb H, G)$.
\end{proof}

We are now ready to prove \cref{thm:main}.

\begin{proof}[Proof of \cref{thm:main}] Let $C$ be a non-singleton connected component of $\mb G$ whose orbit is point-convergent.
    Since $C$ is not a singleton, there exists a Morse gauge $N$ and a vertex $v\in X$ such that $(C, v)$ is $N$--feasible. By \cref{lemma:orbit-convergence_implies_finite_hull}, $\hull{C, v}{M}$ is finite for all $M\geq N$. \cref{thm:stabilizer} concludes the proof.
\end{proof}

\section{Graph of groups whose family of non-singleton components is point-convergent}\label{sec:graph_of_groups}
In this section we prove \cref{cor:graph_of_groups}, which states that the set of non-singleton connected components $\mc C_G$ is point-convergent for fundamental groups $G$ of certain graphs. These groups include a large class of groups that are Morse-accessible. As this is a purely topological condition, it is preserved under quasi-isometries. Thus any group quasi-isometric to $G$ satisfies this property too. Accordingly, this section proves \cref{cor:q_i_to_amalgam}.

\subsection{Setup and Notation}
\label{subsec}
In this subsection, we setup the notation for the proof of \cref{cor:graph_of_groups}. Apart from slight changes (most prominently the definition of the relative Morse boundary), we work with the notation of \cite{FK22} and more details can be found there.
In this subsection, we introduce basics on graph of groups and notation. As we study the same setting as in \cite{FK22}, we use the same notation as in \cite{FK22} and cite the lemmas from  \cite{FK22} we need.

Let $G$ be a finitely generated group group that splits as graph of groups. By \cite{Serre2012trees} this is equivalent to the existence of a simplicial tree on which $G$ acts without inversions and without global fixed point. The latter means that the the action is \textit{non-elliptic}. 

\begin{Ass}
We assume that
    \begin{itemize}
    \item $G$ is generated by a finite subset $S_G\cu G$ with $1\in S_G$ and $S_G=S_G^{-1}$;
    \item $\G$ denotes the corresponding Cayley graph of $G$, endowed with its graph metric $d_{\G}$;
    \item there is a non-elliptic, minimal action without inversions on a simplicial tree $G\acts T$ with edge set $\E(T)$;
    \item all edge-stabilisers $E\leq G$ are finitely generated and undistorted in $G$, and satisfy $(\mb E,G)=\emptyset$.
\end{itemize}
\end{Ass}

  We refer to the vertex stabilisers and edge stabilisers for the action $G\acts T$ as \emph{vertex groups} and \emph{edge groups} respectively and usually denote them by $V$ and $E$ respectively. To avoid confusion between  edges and vertices of $\G$ and $T$, we will denote the latter by fraktur letters $\mf{e}$ and $\mf{x}$.

 \begin{remark}\label{remark:different-relative-Morse-boundary}
     In light of \cref{lemma:defintions_agree}, the relative Morse boundary (as defined in this paper) of undistorted subgroups is empty if and only if the relative Morse boundary as defined in \cite{FK22} is empty. Thus the groups $G$ as defined above satisfy the assumptions of groups studied in \cite{FK22}.
 \end{remark}

\begin{remark}\label{finite graph of groups}
   As in \cite[Remark 2.13]{FK22} we remark that the action $G\acts T$ is cocompact. Moreover, if all edge groups are finitely generated and undistorted in $G$, then all vertex groups are finitely generated and undistorted in $G$. A proof for this well-known fact can be found in \cite[Lemma 2.15]{FK22}.
\end{remark}

Every edge $\mf{e}\in\E(T)$ corresponds to two \textit{halfspaces} $\mf{h},\mf{h}^*$ that are the two connected components of  the graph we obtain by removing the inner of the edge $\mf{e}$ from $x$.  We always denote the complementary halfspace of a halfspace $\mf{h}$ by $\mf{h}^*$. Let $\H(T)$ be the set of all halfspaces of $T$. If $\mf h$ is a halfspace in $\H(T)$, we denote by $G_{\mf{h}}\leq G$ its stabiliser in $G$.
 Since $G$ acts without inversions, $G_{\mf{h}}$ coincides with the stabiliser $G_{\mf{e}}$ of the edge $\mf{e}$ associated to $\mf{h}$. Thus, if $p\in T^{(0)}$ is a basepoint and $f_p\colon G\ra T$ the corresponding orbit map $f_p(g)=gp$, then every $\mf{h}\in\H(T)$ corresponds to a $G_{\mf{h}}$--invariant partition $G=f_{\mf p}^{-1}(\mf{h})\sqcup f_{\mf p}^{-1}(\mf{h}^*)$. These properties make it possible to define for every edge $\mf e \in \mc E(T)$ a corresponding subgraph $\G(\mf{e})$ that reflects the action $G \acts T$. This is the content of the following lemma from \cite{FK22}.

As in \cite{FK22}, we denote closed metric balls by $\mc{B}(x,r)$ and closed metric neighbourhoods of subsets by $\mc{N}(A,r)$. Where necessary, we may add a subscript $\mc{B}_X(x,r)$ or $\mc{N}_Y(A,r)$ to specify the relevant space.
\begin{lemma}\label{AIS lem}[Lemma 3.2 of \cite{FK22}]
    For every edge $\mf{e}\in\E(T)$ we can choose a subgraph $\G(\mf{e})\cu\G$ so that the following hold:
    \begin{enumerate}
        \item $\G(\mf{e})$ is connected, $G_{\mf{e}}$--invariant and $G_{\mf{e}}$--cocompact;
        \item $\G(g\mf{e})=g\G(\mf{e})$ for all $g\in G$ and $\mf{e}\in\E(T)$;
        \item if $\mf{h},\mf{h}^*$ are the two halfspaces determined by $\mf{e}$, then $\G(\mf{e})$ contains every edge of $\G$ with an endpoint in $f_{\mf p}^{-1}(\mf{h})$ and the other in $f_{\mf p}^{-1}(\mf{h}^*)$;
        \item for every $r\geq 0$, we have $\mc{N}_{\G}(f_{\mf p}^{-1}(\mf{h}), r)\cap\mc{N}_{\G}(f_{\mf p}^{-1}(\mf{h}^*), r)\cu\mc{N}_{\G}(\G(\mf{e}),r)$;\label{ais4}
        \item each edge $e\cu\G$ lies in the subgraph $\G(\mf{e})$ for at most finitely many edges $\mf{e}\in\E(T)$.
    \end{enumerate}
\end{lemma}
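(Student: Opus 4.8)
The plan is to build $\G(\mf{e})$ so that it hugs the set of ``cut edges'' of $\mf{e}$. Since $G\acts T$ is cocompact there are only finitely many $G$--orbits of edges of $T$, so it suffices to construct $\G(\mf{e})$ for one $\mf{e}$ in each orbit in a $G_{\mf{e}}$--invariant way and then put $\G(g\mf{e})\define g\G(\mf{e})$; well-definedness of this extension is precisely the $G_{\mf{e}}$--invariance in $(1)$, and it makes $(2)$ automatic. Fix such an $\mf{e}$ with halfspaces $\mf{h},\mf{h}^*$, and let $W(\mf{e})\cu\G$ be the (manifestly $G_{\mf{e}}$--invariant) subgraph consisting of all edges of $\G$ with one endpoint in $f_{\mf p}^{-1}(\mf{h})$ and the other in $f_{\mf p}^{-1}(\mf{h}^*)$, together with their endpoints. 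I would then take $\G(\mf{e})$ to be the connected component, of the full subgraph of $\G$ induced on
\[
  D_R(\mf{e})\define\mc{N}_{\G}\!\big(f_{\mf p}^{-1}(\mf{h}),R\big)\cap\mc{N}_{\G}\!\big(f_{\mf p}^{-1}(\mf{h}^*),R\big),
\]
that contains $W(\mf{e})$, for a constant $R$ fixed (large) at the end. Two things are immediate: $W(\mf{e})\cu D_R(\mf{e})$ once $R\geq1$, which will yield $(3)$; and every vertex of $D_R(\mf{e})$ lies within distance $R$ of a cut edge, since a shortest path from it to a nearest point of $f_{\mf p}^{-1}(\mf{h})$, concatenated with one to a nearest point of $f_{\mf p}^{-1}(\mf{h}^*)$, maps under $f_{\mf p}$ to a path of $T$ joining the two sides of $\mf{e}$, hence crossing $\mf{e}$, hence passing through a cut edge. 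So $D_R(\mf{e})$ and $W(\mf{e})$ are at finite Hausdorff distance.

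The one substantial point — and the step I expect to be the main obstacle — is that $W(\mf{e})$ is $G_{\mf{e}}$--cocompact. My plan for it: let $Y\cu T$ be the set of midpoints of edges of $T$; it is $G$--invariant, and since distinct edge-midpoints are at distance $\geq1$, it is closed and discrete. The collapse map $f_{\mf p}\colon\G\to T$ (sending a vertex $g$ to $gp$ and each edge of $\G$ to the corresponding $T$--geodesic) is continuous and $G$--equivariant, so $f_{\mf p}^{-1}(Y)$ is a closed $G$--invariant subset of $\G$; as $\G/G$ is compact, $G$ acts cocompactly on $f_{\mf p}^{-1}(Y)$, say with compact fundamental domain $K$. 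Because $Y$ is discrete and $f_{\mf p}(K)\cu Y$ is compact, $f_{\mf p}(K)$ is finite, meeting only finitely many edge-midpoints $\mathring{\mf f}_1,\dots,\mathring{\mf f}_m$. Now $f_{\mf p}^{-1}(Y)=\bigsqcup_{\mf{e}'\in\E(T)}f_{\mf p}^{-1}(\mathring{\mf{e}}')$ is a disjoint union with each piece nonempty (by minimality of $G\acts T$ the convex hull of $Gp$ is all of $T$, so every edge separates $Gp$), whence the setwise $G$--stabiliser of $f_{\mf p}^{-1}(\mathring{\mf{e}})$ is exactly $G_{\mf{e}}$ (using that $G\acts T$ has no inversions). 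Chasing $K$ across the action, every $x\in f_{\mf p}^{-1}(\mathring{\mf{e}})$ is of the form $g\cdot k$ with $k\in K$ and $f_{\mf p}(k)=\mathring{g^{-1}\mf{e}}\in\{\mathring{\mf f}_1,\dots,\mathring{\mf f}_m\}$, so $g^{-1}\mf{e}=\mf f_j$ for some $j$, i.e.\ $g\in G_{\mf{e}}h_j$ for one of finitely many elements $h_j$; therefore $f_{\mf p}^{-1}(\mathring{\mf{e}})\cu G_{\mf{e}}\cdot\big(\bigcup_j h_jK\big)$, a $G_{\mf{e}}$--orbit of a compact set. As $f_{\mf p}^{-1}(\mathring{\mf{e}})$ and $W(\mf{e})$ are at Hausdorff distance $\leq1$, this makes $W(\mf{e})$ — and hence $D_R(\mf{e})$ and $\G(\mf{e})$ — $G_{\mf{e}}$--cocompact. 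Together with $G_{\mf{e}}$--invariance and the connectedness below, this gives $(1)$. (Note this uses only finite generation of $G\acts T$ and $G_{\mf{e}}$, not undistortedness.)

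What remains is connectedness and the last two items. For connectedness I would invoke finite generation of $G_{\mf{e}}$: fixing $x_0\in W(\mf{e})$ and a finite generating set $\hat S$ of $G_{\mf{e}}$, the subgraph $\bigcup_{h\in G_{\mf{e}},\,\hat s\in\hat S}h\cdot[x_0,\hat s x_0]$ is connected, contains $W(\mf{e})$, and lies in $\mc{N}_{\G}(G_{\mf{e}}x_0,\Delta)$ for a constant $\Delta$ depending only on $\hat S$ and $x_0$; since $G_{\mf{e}}x_0\cu W(\mf{e})$ and every cut edge has an endpoint on each side of $\mf{e}$, every point within $\Delta$ of $G_{\mf{e}}x_0$ lies within $\Delta+1$ of both $f_{\mf p}^{-1}(\mf{h})$ and $f_{\mf p}^{-1}(\mf{h}^*)$, so for $R\geq\Delta+1$ this connected subgraph sits inside $D_R(\mf{e})$ and $W(\mf{e})$ lies in one component of $D_R(\mf{e})$ — the one we declare to be $\G(\mf{e})$. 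Then $(3)$ holds by construction. For $(4)$: a point $x$ in $\mc{N}_{\G}(f_{\mf p}^{-1}(\mf{h}),r)\cap\mc{N}_{\G}(f_{\mf p}^{-1}(\mf{h}^*),r)$ is itself coarsely on one side of $\mf{e}$, so a geodesic of length $\leq r$ from $x$ to the other side maps under $f_{\mf p}$ to a path crossing $\mf{e}$, hence running through a cut edge; thus $x\in\mc{N}_{\G}(W(\mf{e}),r)\cu\mc{N}_{\G}(\G(\mf{e}),r)$. For $(5)$: if $e\cu\G$ lies in $\G(\mf{e}')$ then $e$ is within distance $R$ of a cut edge $c$ of $\mf{e}'$, so $c$ lies in the finite ball $\mc{B}_{\G}(e,R+1)$; and for each of these finitely many edges $c$, the segment $f_{\mf p}(c)$ has length at most $\max_{s\in S_G}d_T(p,sp)$ and crosses $\mf{e}'$, so $\mf{e}'$ ranges over a finite set. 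This exhausts the list of properties.
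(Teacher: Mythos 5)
The paper does not actually prove this statement; it is imported verbatim as Lemma~3.2 of \cite{FK22}, so there is no internal proof to compare against. Your construction is the natural, self-contained one, and it is essentially correct: reducing to one edge per $G$--orbit and extending equivariantly handles (1)(invariance) and (2); the set $W(\mf{e})$ of cut edges is where all the content lives, and your cocompactness argument for it --- pulling back the discrete set of edge-midpoints of $T$ under the (equivariant, continuous) extension of $f_{\mf p}$, identifying the setwise stabiliser of $f_{\mf p}^{-1}(\mathring{\mf{e}})$ with $G_{\mf{e}}$ via disjointness and minimality, and covering it by finitely many $G_{\mf{e}}$--translates of a compact set --- is sound and correctly avoids any appeal to undistortedness. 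The arguments for (3), (4) and (5) (every path between the two vertex-preimages crosses a cut edge; a cut edge near $e$ determines $\mf{e}'$ up to the finitely many $T$--edges crossed by a segment of length $\max_{s\in S_G}d_T(p,sp)$) are all fine, granting the implicit point that $R$ can be taken uniform over the finitely many $G$--orbits of edges.

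One step is stated incorrectly, though it is patchable with tools you already have: the subgraph $\bigcup_{h\in G_{\mf{e}},\,\hat s\in\hat S}h\cdot[x_0,\hat s x_0]$ does \emph{not} contain $W(\mf{e})$ --- it only contains the orbit $G_{\mf{e}}x_0$ together with geodesics between orbit points. As written, this leaves open whether all of $W(\mf{e})$ lies in a single component of $D_R(\mf{e})$, which is needed both for $\G(\mf{e})$ to be well defined as ``the component containing $W(\mf{e})$'' and for property (3). The fix is immediate from your cocompactness step: $W(\mf{e})$ lies in $\mc{N}_{\G}(G_{\mf{e}}x_0,\Delta')$ for some $\Delta'$, so each point of $W(\mf{e})$ can be joined to $G_{\mf{e}}x_0$ by a geodesic of length at most $\Delta'$; since that geodesic starts at a point within distance $1$ of both $f_{\mf p}^{-1}(\mf{h})$ and $f_{\mf p}^{-1}(\mf{h}^*)$, it stays in $D_R(\mf{e})$ once $R\geq\Delta'+1$. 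Adjoining these connecting geodesics to your coarse Cayley graph yields a connected subgraph of $D_R(\mf{e})$ containing all of $W(\mf{e})$, and the rest of your argument goes through unchanged. (One cosmetic remark: \cref{rem:X(e)qi} later uses that the $\G(\mf{e})$ of \cite{FK22} contains the vertex set $G_{\mf{e}}$; your $\G(\mf{e})$ need not, but it is $G_{\mf{e}}$--cocompact and the action is proper, so it is still quasi-isometric to $G_{\mf{e}}$, which is all that is used.)
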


\begin{remark} \label{rem:X(e)qi}
   The subgraph $X(\mf e)$ in \cref{AIS lem} is constructed in \cite{FK22} so that it contains the vertex set $G_{\mf{e}}$. Note that $G_{\mf{e}}$ does not only act cocompactly by isometries on $X(\mf e)$ but also properly~\cite{hyperbolic_spaces}[Ch. I.8, Prop 8.5(4)]. So, $G_e$ acts geometrically on $X(\mf e)$ and $X(\mf e)$ is quasi-isometric to $G_{\mf e}$. 
\end{remark}

Let $\mf{e}$ be an edge in $T$ and $X(\mf{e})$ be the subgraph of $X$ as in \cref{AIS lem}. Let $\mf{h}$ be one of the two halfspaces determined by $\mf e$. Then $X(\mf h)$ denotes the subspace of $X$ that is obtained from the vertex set $f_p^{-1}(\mf h)\setminus X(\mf e)$ by adding all (half-open) edges of $X \setminus X(\mf e)$ with an endpoint in $f_p^{-1}(\mf h)\setminus X(\mf e)$. This way, we obtain a $G_{\mf e}$-invariant partition of $X$: 
\[X=X(\mf h) \cup X(\mf e)\cup X(\mf h*)\]
Recall that we assume that $(\mb E,G)=\emptyset$ for every edge group $E$. This assumption is not needed so far but becomes important now. 
If $x\in\mb G$, we consider the following subset of $\H(T)$:
\begin{align*}
\s(x)\coloneqq\{\mf{h}\in\H(T) \mid \text{if $x=[\g]$, then $\g$ is eventually contained in $\G(\mf{h})$}\}.
\end{align*}
Using that the relative Morse boundary of every edge group is empty, it is proven in \cite{FK22}  that this is well-defined and an \emph{ultrafilter}, i.e. it satisfies the following two properties: 
\begin{enumerate}
\item if $\mf{e}\in\E(T)$, then exactly one of the two halfspaces determined by $\mf{e}$ lies in $\s(x)$;
\item if $\mf{h}_1,\mf{h}_2\in\s(x)$, then $\mf{h}_1\cap\mf{h}_2\neq\emptyset$.
\end{enumerate}

\begin{remark}[Remark 3.6 of \cite{FK22}]\label{same ultrafilter rmk}
If $x,y\in\mb G$ are in the same connected component, then $\s(x)=\s(y)$. 
\end{remark}

For every vertex $\mf x\in T^{(0)}$, the set $\s_{\mf x}\coloneqq \{\mf{h}\in\H(T) \mid \mf x\in\mf{h}\}$
is an ultrafilter that plays an important role in the following lemma and the proof of \cref{lemma:finitely_many_vert_groups}.
Every ultrafilter $\s\cu\H(T)$ that does not contain infinite descending chains of halfspaces is of the form $\s_{\mf x}$ for a vertex $\mf x\in T^{(0)}$.

\begin{lemma}[Lemma 3.7 of \cite{FK22}]\label{line ultrafilter}
Let $\alpha$ be an $N$-Morse geodesic line whose endpoints at infinity $\alpha^{\pm}\in\mb G$ lie in the same connected component of $\mb G$. Set $\s\coloneqq\s(\alpha^+)$, which coincides with $\s(\alpha^-)$ by \cref{same ultrafilter rmk}. Then, the following hold:
\begin{enumerate}
\item there is a constant $D(N)$ depending only on $N$ so that $\alpha$  lies in the intersection of the $D(N)$-neighbourhoods of the subsets $\G(\mf{h})\cu\G$ with $\mf{h}\in\s$;\label{line1}
\item there exists a vertex ${\mf x}\in T$ such that $\s=\s_{\mf x}$;
\item $\alpha$ stays at bounded distance from the stabiliser $G_{\mf x}\leq G$.
\end{enumerate}
\end{lemma}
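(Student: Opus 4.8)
\textbf{Proof plan for Lemma \ref{line ultrafilter}.}

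The plan is to establish the three items in order, using the ultrafilter $\s = \s(\alpha^+) = \s(\alpha^-)$ and the properties of the subgraphs $\G(\mf h)$ from \cref{AIS lem}. For \ref{line1}, fix a halfspace $\mf h\in\s$ with complement $\mf h^*$. By definition of $\s(\alpha^+)$ and $\s(\alpha^-)$ (and using \cref{same ultrafilter rmk} to know both endpoints determine the same ultrafilter), both rays $\alpha[0,\infty)$ and $\alpha[0,-\infty)$ are eventually contained in $\G(\mf h)$; in particular each ray has all but a bounded initial segment inside $\G(\mf h)$. I want to turn ``eventually contained'' into ``contained in a uniform neighbourhood'' — this is where Morseness is used. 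The complement of $\G(\mf h)$ in $\G$ is contained in $\G(\mf h^*)$, which (being $f_{\mf p}^{-1}(\mf h^*)$ together with boundary edges) is separated from the bulk of $\mf h$ by $\G(\mf e)$ via item \ref{ais4} of \cref{AIS lem}. The key point: if some subsegment of $\alpha$ were to leave a large neighbourhood of $\G(\mf h)$, it would have to cross $\G(\mf e)$ twice (entering $\G(\mf h^*)$ and coming back), producing a detour; since $\alpha$ is an $N$-Morse \emph{geodesic}, slimness of Morse triangles (\cref{lemma:triangles}) and the hyperbolicity constant $\delta_N$ bound how far a geodesic can stray before returning, giving a bound $D(N)$ on the distance from $\alpha$ to $\G(\mf h)$ depending only on $N$ (and the fixed constants in \cref{AIS lem}). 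This argument is carried out in \cite[Lemma 3.7]{FK22} and I would simply adapt it, noting that our relative-Morse-boundary convention agrees with theirs for undistorted edge groups by \cref{lemma:defintions_agree} and \cref{remark:different-relative-Morse-boundary}.

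For item (2), I claim $\s$ contains no infinite strictly descending chain $\mf h_1 \supsetneq \mf h_2 \supsetneq \cdots$ of halfspaces; granting this, the remark immediately preceding the lemma gives a vertex $\mf x\in T^{(0)}$ with $\s = \s_{\mf x}$. Suppose such a descending chain existed. The corresponding edges $\mf e_i$ are distinct and the subgraphs $\G(\mf e_i)$ are pairwise ``nested''; by \ref{line1}, $\alpha$ lies in the $D(N)$-neighbourhood of every $\G(\mf h_i)$, hence of every $\G(\mf e_i)$-side. But a descending chain of halfspaces in a tree forces the $\mf e_i$ to march off to a single end of $T$, so by item (5) of \cref{AIS lem} (each edge of $\G$ lies in only finitely many $\G(\mf e)$), the intersection $\bigcap_i \mc N_\G(\G(\mf h_i), D(N))$ must be bounded — contradicting that it contains the bi-infinite geodesic $\alpha$. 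Hence no such chain exists and item (2) follows.

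For item (3), once we have $\mf x$ with $\s = \s_{\mf x}$, the vertex stabiliser $G_{\mf x}$ is finitely generated and undistorted in $G$ by \cref{finite graph of groups}, and the halfspaces in $\s_{\mf x}$ are exactly those containing $\mf x$. Using \ref{line1}, $\alpha$ lies in a uniform neighbourhood of $\bigcap_{\mf h\in\s_{\mf x}}\G(\mf h)$; I would show this intersection is contained in a bounded neighbourhood of the vertex set $G_{\mf x}$ — intuitively, a point of $\G$ lying near $\G(\mf h)$ for \emph{every} halfspace containing $\mf x$ is ``pinched'' at $\mf x$ under the orbit map $f_{\mf p}$, so its image in $T$ is within bounded distance of the $G$-orbit of $\mf x$, which pulls back (cocompactness of $G\acts T$, properness) to bounded distance from $G_{\mf x}$ in $\G$. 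Assembling, $\alpha\subset\mc N_\G(G_{\mf x}, R)$ for some $R = R(N)$, which is the assertion.

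The main obstacle is item \ref{line1}: converting the soft statement ``$\alpha$ is eventually inside each $\G(\mf h)$'' into a uniform neighbourhood bound independent of $\alpha$. This requires genuinely using that $\alpha$ is a geodesic and $N$-Morse — a non-Morse quasigeodesic could spiral arbitrarily far from $\G(\mf h)$ before returning — and carefully combining the separation property \ref{ais4} of \cref{AIS lem} with slimness of Morse triangles. Items (2) and (3) are then essentially bookkeeping with trees and the finiteness property (5).
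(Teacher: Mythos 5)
This statement is not proved in the paper at all: it is Lemma~3.7 of \cite{FK22}, imported verbatim as a black box (the only in-paper work is \cref{remark:different-relative-Morse-boundary}, which checks that the two conventions for the relative Morse boundary agree so that the citation applies). So there is no internal proof to compare your proposal against, and your own sketch ultimately also defers to \cite{FK22} for the key step. Judged as a standalone argument, your outline has the right global shape (eventual containment upgraded to a uniform neighbourhood; exclusion of infinite descending chains to get $\s=\s_{\mf x}$; locating $\alpha$ near $G_{\mf x}$), and you correctly identify item (1) as the crux.

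However, your sketch of item (1) has a genuine gap. You argue that an excursion of $\alpha$ out of a neighbourhood of $\G(\mf h)$ forces two crossings of $\G(\mf e)$, and then assert that slimness of $N$-Morse triangles bounds ``how far a geodesic can stray before returning.'' That is not true: a geodesic segment with both endpoints in (or near) a subset $\G(\mf e)$ can in general wander arbitrarily far from that subset --- Morseness of $\alpha$ controls how other quasi-geodesics fellow-travel $\alpha$, not how $\alpha$ relates to $\G(\mf e)$. What actually rules out long excursions is the standing hypothesis $(\mb E, G)=\emptyset$ together with undistortedness of the edge groups and cocompactness: if $N$-Morse subsegments with endpoints near translates of $\G(\mf e)$ could make arbitrarily deep excursions, a translation-and-limit (Arzel\`a--Ascoli) argument using \cref{lemma:convergence_of_N_morse} would produce a Morse direction in the relative Morse boundary of an edge group, a contradiction; this is also what makes the constant depend only on $N$. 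Your item (1) never invokes $(\mb E,G)=\emptyset$, which is a red flag given that the entire section's hypotheses are organised around it. A smaller issue of the same kind occurs in your item (2): property (5) of \cref{AIS lem} bounds, for a fixed edge of $\G$, the number of $\mf e$ with $\G(\mf e)$ containing it, and you need to combine this with local finiteness of $\G$ and the descending-chain structure to conclude that the intersection of the $D(N)$-neighbourhoods is bounded; as written the deduction is asserted rather than derived. Item (3) is plausible but likewise stated at the level of intuition (``pinched at $\mf x$'') rather than proof.
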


\subsection{Proof of \cref{cor:graph_of_groups}}
With notation as in \cref{subsec}, we prove \cref{cor:graph_of_groups}. It is a consequence of the following lemma.

\begin{lemma}\label{lemma:finitely_many_vert_groups}
    Let a finitely generated group $G$ split as a graph of groups. Suppose that
\begin{itemize}
    \item all edge groups are finitely generated and undistorted in $G$;
    \item $(\partial_*E,G)=\emptyset$ for every edge group $E$.
\end{itemize}
   Let $N$ be a Morse gauge and let $v$ be a vertex in $X$. Let $(\gamma_n)$ be a sequence of converging $N$-Morse geodesic lines which contain $v$ and whose endpoints $\gamma_n^- $ and $\gamma_n^+$ lie in a connected component $C_n\subset \mb G$. Then there exists $n_0$ and a vertex $\mf x\in T$ such that for all $n\geq n_0$, $\sigma(\gamma^+) = \sigma_{\mf x}$.
\end{lemma}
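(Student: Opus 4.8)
The strategy is to combine the convergence of the $N$-Morse lines $\gamma_n$ with the ultrafilter machinery of \cref{line ultrafilter}. First I would observe that, up to passing to a subsequence, the lines $\gamma_n$ converge uniformly on compact sets to a geodesic line $\gamma$ through $v$; by \cref{lemma:convergence_of_N_morse} the limit $\gamma$ is again $N$-Morse, and the endpoints $\gamma_n^{\pm}$ converge to $\gamma^{\pm}$ in $\mb G$. Since each $\gamma_n$ has both endpoints in the connected component $C_n$, and $\gamma$ has both endpoints in some connected component $C$ of $\mb G$, I can invoke \cref{line ultrafilter} applied to $\gamma$: there is a vertex $\mf x \in T$ with $\s(\gamma^+) = \s_{\mf x}$, and by \cref{line1} (the first bullet of \cref{line ultrafilter}), $\gamma$ lies in the intersection of the $D(N)$-neighbourhoods of the subgraphs $\G(\mf h)$ for all $\mf h \in \s_{\mf x}$.

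Next I would promote this to the $\gamma_n$. Fix a finite "separating" collection of halfspaces: since the action $G \acts T$ is cocompact and each edge $e \cu X$ lies in only finitely many $\G(\mf e)$ (\cref{AIS lem}(5)), the vertex $\mf x$ is determined by how finitely many halfspaces (those adjacent to $\mf x$, equivalently a bounded ball in $T$) sit relative to a bounded neighbourhood of $v$. The key point is: for each halfspace $\mf h$ incident to $\mf x$, the line $\gamma$ is eventually in $\G(\mf h)$, so some subray of $\gamma$ stays in $\G(\mf h)$ and exits every bounded set; since $\gamma_n \to \gamma$ uniformly on compact sets and all lines pass through $v$, for $n$ large the relevant long initial (or terminal) segment of $\gamma_n$ tracks that of $\gamma$ closely, hence lies in a fixed neighbourhood of $\G(\mf h)$ and, by \cref{ais4} together with $(\mb E, G) = \emptyset$ (which forces a Morse ray that repeatedly returns near $\G(\mf e)$ to actually have an end trapped on one side), ends up eventually contained in $\G(\mf h)$ itself. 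This gives $\mf h \in \s(\gamma_n^+)$ for all $n \geq n_0$, uniformly over the finitely many halfspaces incident to $\mf x$. Since $\s(\gamma_n^+)$ is an ultrafilter and agrees with $\s_{\mf x}$ on a set of halfspaces that pins down $\mf x$ (no infinite descending chain is possible because $\gamma_n$ is a bi-infinite Morse line with both endpoints in the same component, exactly the situation of \cref{line ultrafilter}(2)), we conclude $\s(\gamma_n^+) = \s_{\mf x}$ for all $n \geq n_0$.

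The main obstacle I anticipate is the second step: rigorously transferring "eventually contained in $\G(\mf h)$" from the limit line $\gamma$ to the approximating lines $\gamma_n$. Uniform convergence on compact sets only controls bounded initial segments, whereas "eventually contained in $\G(\mf h)$" is a statement about the tails. The resolution should use that $\G(\mf h)$ and $\G(\mf h^*)$ are separated by $\G(\mf e)$ in a coarse sense (\cref{ais4}), that $(\mb E, G) = \emptyset$ prevents a Morse geodesic ray from staying in a neighbourhood of $\G(\mf e)$, and that the Morse gauge $N$ is uniform over all $\gamma_n$ — so a single "crossing" of $\G(\mf e)$ at bounded distance from $v$ already forces the whole tail to one side. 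Making the bound $n_0$ depend only on $N$, $v$, and $\mf x$ (not on the individual $C_n$) is where the argument needs care, but it is exactly the kind of estimate carried out in \cite{FK22}, so I would cite the relevant lemmas there rather than reproduce them.
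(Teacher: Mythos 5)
Your overall strategy — pass to the limit line $\gamma$, identify the vertex $\mf x$ via \cref{line ultrafilter}, and show that for large $n$ the ultrafilter $\s(\gamma_n^+)$ cannot disagree with $\s_{\mf x}$ on any halfspace — is the same as the paper's, and most of the ingredients you name are the right ones. However, there is a genuine gap at the finiteness/uniformity step. You reduce to controlling ``the finitely many halfspaces incident to $\mf x$ (equivalently a bounded ball in $T$)'', but the Bass--Serre tree $T$ need not be locally finite: a vertex group may contain infinitely many cosets of incident edge groups, so there may be infinitely many edges of $T$ adjacent to $\mf x$. Your ``deep excursion forces the tail to one side'' argument produces, for each individual edge $\mf e$ incident to $\mf x$, a threshold $n_0(\mf e)$ beyond which $\gamma_n$ picks the $\mf x$-side of $\mf e$; with infinitely many incident edges you cannot take a maximum, and for an edge whose wall $X(\mf e)$ is far from $v$ nothing in your argument prevents $\gamma_n$ from crossing it after it has stopped tracking $\gamma$. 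The missing step --- and the crux of the paper's proof --- is to show that any edge $\mf e$ on which $\s(\gamma^+)$ and $\s(\gamma_n^+)$ disagree must have its wall within distance $D(N)+1$ of $v$: apply \cref{line ultrafilter}\eqref{line1} to \emph{both} lines, so that $\gamma\cu\mc N(X(\mf h),D(N))$ and $\gamma_n\cu\mc N(X(\mf h^*),D(N))$, and feed the common point $v$ into \cref{AIS lem}\eqref{ais4}. Only then does \cref{AIS lem}(5) together with local finiteness of $X$ give a finite list of candidate walls, after which either your tail-transfer argument or, as in the paper, the observation that a single recurring wall would trap all of $\gamma$ in $\mc N(X(\mf e),D(N)+1)$ and contradict $(\mb G_{\mf e},G)=\emptyset$, finishes the proof.

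A secondary point: the tail-transfer claim itself is stated too strongly. ``A single crossing of $X(\mf e)$ forces the whole tail to one side'' is false for a general Morse geodesic line, which can perfectly well cross a wall exactly once. What saves you is that $\gamma_n^+$ and $\gamma_n^-$ lie in the same component $C_n$, so $\s(\gamma_n^+)=\s(\gamma_n^-)$ and \cref{line ultrafilter}\eqref{line1} applies to the entire line $\gamma_n$; equivalently, a single point of $\gamma_n$ lying in $f_{\mf p}^{-1}(\mf h)$ at distance more than $D(N)$ from $X(\mf e)$ already forces $\mf h\in\s(\gamma_n^+)$ by \cref{AIS lem}\eqref{ais4}. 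Phrased this way the step is a one-line consequence of the quoted lemmas rather than an appeal to unnamed estimates from \cite{FK22}.
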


\begin{proof}
    Let $\gamma$ be the geodesic line that the $\gamma_n$ converge to. By \cref{lemma:convergence_of_N_morse}, $\gamma$ is $N$-Morse, and we have that $\gamma^- = \lim_{n\to \infty} \gamma_n^-$, $\gamma^+ = \lim_{n\to \infty} \gamma_n^+$. In other words, $\gamma^-, \gamma^+$ both lie in $\lim_{n\to \infty} C_n$, which is connected since it is a limit of connected sets. Let $C$ be the connected component containing $\gamma^-$ and $\gamma^+$. 
    Let $\mf x\in T$ be the vertex such that $\sigma_{\mf x} = \sigma(\gamma^+)$.

    Assume that for some $n$ we have that $\sigma(\gamma_n^+) = \sigma_{\mf z}$ for some vertex $\mf z\in T$ with $\mf z\neq \mf x$. Let $\mf e$ be an edge in $T$ on the geodesic from $\mf x$ to $\mf z$. Let $\mf h$ and $\mf h^*$ be the pair of half-spaces corresponding to $\mf e$ containing $\mf x$ and $\mf z$ respectively. Thus, $\mf h\in \sigma_{\mf{x}}$ and $\mf h^*\in \sigma_{\mf{z}}$. By \cref{line ultrafilter}\eqref{line1}, $\gamma$ and $\gamma_n$ are contained in the $D(N)$-neighbourhood of the subset $X(\mf h)$ and  $X(\mf h^*)$ respectively. For any $w \in \mc{N}_X(\gamma, 1)\cap \mc{N}_X(\gamma_n, 1)$, we have that $w \in \mathcal{N}_X(X(\mf h),D(N)+1) \cap \mathcal{N}_X(X(\mf h^*),D(N)+1) $. 
    \cref{AIS lem}\eqref{ais4} implies that  $w \in \mc N_X(X(\mf e), D(N)+1)$. Since $v$ lies on all $\gamma_n$, we have in particular that $v\in \mc N_X(X(\mf e), D(N)+1)$.
    
    We observe: By~\cref{AIS lem}, for each edge $e$ in $X$ there are only finitely many edges $\mf e \in \mathcal E(T)$ with $e \in X(\mf e)$. As $X$ is locally finite, there are only finitely many edges $\mf e \in \mathcal E(T)$ so that $d_X(X(\mf e), v)\leq D(N)+1$.

    If there are infinitely many $n$ such that $\sigma(\gamma_n^+)\neq \sigma(\gamma^+)$, the observation above implies that there exists an edge $\mf e\in \mathcal E(T)$ such that 
    \[
    \mc N_X(\gamma, 1)\cap \mc N_X(\gamma_n, 1)\subset \mc N_X(X(\mf e), D(N)+1)
    \]
     for infinitely many $n$. Since the $\gamma_n$ converge to $\gamma$, we have $\gamma\subset \mc N_X(X(\mf e), D(N)+1)$. By \cref{rem:X(e)qi}, $X(\mf e)$ contains the vertex set of $G_{\mf e}$ and is quasi-isometric to $G_{\mf e}$. Thus, $\gamma$ lies in  $(\mb G_{\mf e}, G)$ -- a contradiction to $(\mb G_{\mf e}, G)$ being empty.

    Thus there are only finitely many $n$ such that $\sigma(\gamma)\neq \sigma(\gamma_n)$, which concludes the proof.
\end{proof}

We are now ready to prove \cref{cor:graph_of_groups}.

\begin{proof}[Proof of \cref{cor:graph_of_groups}]
    Let $v\in X$ be a vertex and let $N$ be a Morse gauge. By \cref{lemma:full_conv}, it suffices to show that $\comp{v}{N}$ is finite. Assume it is not. That is, there exists a sequence of $N$-Morse geodesic lines $(\gamma_n)_n$ going through $v$ and a sequence $(C_n)_n$ of distinct connected components $C_n\subset \mb G$ such that $\gamma_n^+, \gamma_n^-\in C_n$. Up to passing to a subsequence, we can assume that $(\gamma_n)_n$ is a sequence converging to an $N$-Morse geodesic line $\gamma$ with endpoints $\gamma^- = \lim_{n\to \infty} \gamma_n^-$ and $\gamma^+ = \lim_{n\to \infty} \gamma_n^+$ as $X$ is locally finite. Further, there exists a connected component $C$ such that $\gamma^-, \gamma^+\in C$  because for each $n$, $\gamma_n^+, \gamma_n^-$ lie in a connected component.

    By \cref{lemma:finitely_many_vert_groups} there exists $n_0$ and a vertex $\mf x\in T$ such that for all $n\geq n_0$ we have that $\sigma(\gamma_n^+) = \sigma_{\mf x}$. By \cref{line ultrafilter}(3), for all $n\geq n_0$ we have that $C_n\subset (\mb G_{\mf x}, G)$. 

    By the assumptions on $G$, one of the following holds

    \textbf{Case 1:} $(\mb G_{\mf x}, G)$ is connected. This contradicts the assumption that the connected components $C_n$ are pairwise distinct.

    \textbf{Case 2:}  $\iota\colon \mb G_{\mf x}\emb \mb G$ is a topological embedding and the set of connected components $\mc C_{G_{\mf x}}$ is point-convergent. In this case $\iota$ induces a bijection between the connected components contained in  $(\mb G_{\mf x}, G)$ and the connected components of $\mb G_{\mf x}$. In particular, $(\iota^{-1}(C_n))_{n\geq n_0}$ is a sequence of distinct connected components whose limit contains both $\iota^{-1}(\gamma^-)$ and $\iota^{-1}(\gamma^+)$. This is a contradiction to $\mc C_{G_{\mf x}}$ being point-convergent.

    Since there is a contradiction in both cases, $\comp{v}{N}$ cannot be infinite, which concludes the proof.
\end{proof}


\bibliographystyle{alpha}
\bibliography{bibliothek}

\begin{thebibliography}{GMRS98}

\bibitem[BH09]{hyperbolic_spaces}
Martin Bridson and André Haefliger.
\newblock {\em Metric Spaces of Non-Positive Curvature}, volume 319.
\newblock Springer-Verlag Berlin Heidelberg, 01 2009.

\bibitem[Bow98a]{Bowditch_cutpairs}
Brian~H. Bowditch.
\newblock Cut points and canonical splittings of hyperbolic groups.
\newblock {\em Acta Math.}, 180(2):145--186, 1998.

\bibitem[Bow98b]{Bowditch_top_char}
Brian~H. Bowditch.
\newblock A topological characterisation of hyperbolic groups.
\newblock {\em J. Amer. Math. Soc.}, 11(3):643--667, 1998.

\bibitem[Bow01]{Bowditch_splitting}
B.~H. Bowditch.
\newblock Peripheral splittings of groups.
\newblock {\em Trans. Amer. Math. Soc.}, 353(10):4057--4082, 2001.

\bibitem[BZ19]{benzvi2019boundaries}
Michael Ben-Zvi.
\newblock Boundaries of groups with isolated flats are path connected, 2019.

\bibitem[CCM19]{CCM19}
Ruth Charney, Matthew Cordes, and Devin Murray.
\newblock Quasi-mobius homeomorphisms of morse boundaries.
\newblock {\em Bulletin of the London Mathematical Society}, 51(3):501--515,
  2019.

\bibitem[CCS23]{charney2020complete}
Ruth Charney, Matthew Cordes, and Alessandro Sisto.
\newblock Complete topological descriptions of certain morse boundaries.
\newblock {\em Groups, Geometry, and Dynamics}, 17(1):157--184, 2023.

\bibitem[CD19]{CD16}
Matthew Cordes and Matthew~Gentry Durham.
\newblock Boundary convex cocompactness and stability of subgroups of finitely
  generated groups.
\newblock {\em Int. Math. Res. Not. IMRN}, (6):1699--1724, 2019.

\bibitem[CH17]{CH17}
Matthew Cordes and David Hume.
\newblock Stability and the {M}orse boundary.
\newblock {\em J. Lond. Math. Soc. (2)}, 95(3):963--988, 2017.

\bibitem[Cor17]{Cordes}
Matthew Cordes.
\newblock Morse boundaries of proper geodesic metric spaces.
\newblock {\em Groups Geom. Dyn.}, 11(4):1281--1306, 2017.

\bibitem[CS15]{CHAR14}
Ruth Charney and Harold Sultan.
\newblock Contracting boundaries of {$\rm CAT(0)$} spaces.
\newblock {\em J. Topol.}, 8(1):93--117, 2015.

\bibitem[CSZ23]{CSZ23}
Matthew Cordes, Alessandro Sisto, and Stefanie Zbinden.
\newblock Corrigendum to the paper “morse boundaries of proper geodesic
  metric spaces”.
\newblock {\em Groups, Geometry, and Dynamics}, 2023.

\bibitem[Dav86]{Daverman}
Robert~J. Daverman.
\newblock {\em Decompositions of manifolds}, volume 124 of {\em Pure and
  Applied Mathematics}.
\newblock Academic Press, Inc., Orlando, FL, 1986.

\bibitem[DH22]{dasgupta2022local}
Ashani Dasgupta and G.~Christopher Hruska.
\newblock Local connectedness of boundaries for relatively hyperbolic groups,
  2022.

\bibitem[DT15]{Durham-Taylor}
Matthew~Gentry Durham and Samuel~J. Taylor.
\newblock Convex cocompactness and stability in mapping class groups.
\newblock {\em Algebr. Geom. Topol.}, 15(5):2839--2859, 2015.

\bibitem[Dun85]{Dunwoody}
M.~J. Dunwoody.
\newblock The accessibility of finitely presented groups.
\newblock {\em Invent. Math.}, 81(3):449--457, 1985.

\bibitem[FK22]{FK22}
Elia Fioravanti and Annette Karrer.
\newblock Connected components of {M}orse boundaries of graphs of groups.
\newblock {\em Pacific J. Math.}, 317(2):339--361, 2022.

\bibitem[GdlH90]{Gromov}
\'{E}. Ghys and P.~de~la Harpe, editors.
\newblock {\em Sur les groupes hyperboliques d'apr\`es {M}ikhael {G}romov},
  volume~83 of {\em Progress in Mathematics}.
\newblock Birkh\"{a}user Boston, Inc., Boston, MA, 1990.
\newblock Papers from the Swiss Seminar on Hyperbolic Groups held in Bern,
  1988.

\bibitem[GMRS98]{width}
Rita Gitik, Mahan Mitra, Eliyahu Rips, and Michah Sageev.
\newblock Widths of subgroups.
\newblock {\em Trans. Amer. Math. Soc.}, 350(1):321--329, 1998.

\bibitem[Hau19]{Haulmark}
Matthew Haulmark.
\newblock Local cut points and splittings of relatively hyperbolic groups.
\newblock {\em Algebr. Geom. Topol.}, 19(6):2795--2836, 2019.

\bibitem[HH23]{Haulmark-Hruska}
Matthew Haulmark and G.~Christopher Hruska.
\newblock On canonical splittings of relatively hyperbolic groups.
\newblock {\em Israel J. Math.}, 258(1):249--286, 2023.

\bibitem[HR21]{Hruska_Ruane}
G.~Christopher Hruska and Kim Ruane.
\newblock Connectedness properties and splittings of groups with isolated
  flats.
\newblock {\em Algebr. Geom. Topol.}, 21(2):755--799, 2021.

\bibitem[Mj08]{Mahan}
Mahan Mj.
\newblock Relative rigidity, quasiconvexity and {$C$}-complexes.
\newblock {\em Algebr. Geom. Topol.}, 8(3):1691--1716, 2008.

\bibitem[Ser03]{Serre2012trees}
Jean-Pierre Serre.
\newblock {\em Trees}.
\newblock Springer Monographs in Mathematics. Springer-Verlag, Berlin, 2003.
\newblock Translated from the French original by John Stillwell, Corrected 2nd
  printing of the 1980 English translation.

\bibitem[Swe01]{Swenson}
Eric~L. Swenson.
\newblock Quasi-convex groups of isometries of negatively curved spaces.
\newblock volume 110, pages 119--129. 2001.
\newblock Geometric topology and geometric group theory (Milwaukee, WI, 1997).

\bibitem[Zbi22]{zbinden2022morse}
Stefanie Zbinden.
\newblock Morse boundaries of 3-manifold groups, 2022.

\bibitem[Zbi23a]{zbinden2023morse}
Stefanie Zbinden.
\newblock Morse boundaries of graphs of groups with finite edge groups.
\newblock {\em Journal of Group Theory}, (0), 2023.

\bibitem[Zbi23b]{zbinden2023small}
Stefanie Zbinden.
\newblock Small cancellation groups with and without sigma-compact morse
  boundary.
\newblock {\em arXiv preprint arXiv:2307.13325}, 2023.

\end{thebibliography}

\end{document}